\newcommand{\homo}{{\mathrm H}}
\def\co{\colon\thinspace}
\DeclareMathAlphabet{\mathsfsl}{OT1}{cmss}{m}{sl}
\newtheorem{thm}{Theorem}[section]
\newtheorem{lem}[thm]{Lemma}
\newtheorem{cor}[thm]{Corollary}
\newtheorem{prop}[thm]{Proposition}
\newtheorem*{thm*}{Theorem}
\theoremstyle{definition}
\newtheorem{defn}[thm]{Definition}
\newtheorem{rem}[thm]{Remark}
\newtheorem{qusn}[thm]{Question}
\begin{document}

\title{Dehn surgeries that yield fibred $3$--manifolds}

\author{{\Large Yi Ni}\\American Institute of Mathematics\\
360 Portage Ave, Palo Alto, CA 94306-2244\\and
\\
Department of Mathematics, Columbia University\\ MC 4406, 2990
Broadway, New York, NY 10027}

\maketitle

\begin{abstract}
We study Dehn surgeries on null-homotopic knots that yield fibred
$3$--manifolds when an additional (but natural) homological
restriction is imposed. The major tool used is Gabai's theory of
sutured manifold decomposition. Such surgeries are negative
examples to a question of Michel Boileau. Another result we will
prove is about surgeries which reduce the Thurston norm of a
fibred manifold.
\end{abstract}

\section{Introduction}

One basic question on Dehn surgery is when a Dehn surgery yields a
special type of manifolds. In this paper, we will consider Dehn
surgeries on null-homotopic knots that yield fibred manifolds. All
manifolds we study here will be compact and orientable unless
stated otherwise. Our main result is as follows.

\begin{thm}\label{FibredSurg}
Suppose $Y$ is a compact $3$--manifold with boundary consisting of
tori, $L$ is a nontrivial null-homotopic knot in $Y$. Let $\alpha$
be a nontrivial slope on $T=\partial\mathrm{Nd}(L)$, $X$ is the
manifold obtained from $Y$ by $\alpha$--surgery, $K\subset X$ is
the core of the surgery.

If $X$ fibres over the circle with fibre $F$, such that
\begin{equation}\label{ZeroIntersect}
[K]\cdot[F]=0,\end{equation} then there is an ambient isotopy of
$X$ which takes $K$ to a curve in $F$. Moreover, let $\beta\subset
T$ be the meridian of $L$, then $\beta$ is the frame of $K$
specified by $F\supset K$. Hence $\Delta(\alpha,\beta)=1$, where
$\Delta$ is the distance between two slopes.
\end{thm}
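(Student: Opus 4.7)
The plan is to combine Gabai's sutured manifold theory with the two hypotheses---the fibered structure on $X$ and the null-homotopy of $L$ in $Y$---to extract the conclusion. First, I exploit the homological condition $[K]\cdot[F]=0$: this says that the composition $K \hookrightarrow X \to S^1$ (the fibration map) is null-homotopic, so $K$ lifts to the infinite cyclic cover of $X$ and in particular $K$ can be isotoped in $X$ to be disjoint from $F$. Cutting $X$ along $F$ then exhibits $K$ as a knot in a product $F \times I$, reducing the problem to showing that this knot is isotopic to a level curve $\gamma\times\{1/2\}$.

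Next I put a sutured manifold structure on the common exterior $M = X \setminus \mathrm{int}\,\mathrm{Nd}(K) = Y\setminus \mathrm{int}\,\mathrm{Nd}(L)$: the sutures on $\partial X \subset \partial M$ are those induced by the fibration of $X$ (with $F$ playing the role of the fibered class), and the sutures on the new torus $T$ are two parallel copies of $\beta$. The central technical point, and what I expect to be the main obstacle, is to show that this $(M,\gamma)$ is taut with $F$ Thurston-norm minimizing in it. Tautness coming from the fibered side follows from $F$ being a fiber in $X = M(\alpha)$: Gabai's norm-for-fibers theorem shows $F$ is norm-minimizing in $M$ with respect to the sutured structure that has $\alpha$ on $T$. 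The subtler step is upgrading this to the $\beta$-suture choice on $T$, and this is where the null-homotopy of $L$ in $Y = M(\beta)$ enters essentially: the singular disk it provides obstructs any candidate norm-reducing surface for $[F]$ whose boundary on $T$ has slope $\beta$.

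Once tautness is established, I decompose $(M,\gamma)$ along $F$. Since $F$ is disjoint from $T$ after step one, the decomposition is clean and produces a sutured manifold $(N,\delta)$ whose underlying space is $(F\times I)\setminus \mathrm{int}\,\mathrm{Nd}(K)$, with $R_\pm = F\times\{0,1\}$ and $\beta$-sutures on $\partial\mathrm{Nd}(K)$. Making the singular null-homotopy of $L$ transverse to $F$ and simplifying by innermost-disk/outermost-arc arguments yields a product disk in $(N,\delta)$. Applying Gabai's theorem on taut sutured hierarchies, the hierarchy must terminate in product sutured manifolds, which together with tautness forces $(N,\delta)$ itself to be the product $F'\times I$, where $F'$ is $F$ with an open annular neighborhood of a simple closed curve removed. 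This is exactly the assertion that $K$ is isotopic in $F\times I$ to a simple closed curve lying in $F$.

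The framing statement then falls out of the product structure: the $\beta$-sutures on $T$ are identified with $\partial F' \times \{1/2\}$, which is the push-off of $K$ inside $F$. Hence $\beta$ coincides with the $F$-framing of $K$, and since the $F$-framing is necessarily a longitude of $K$ in $X$, we conclude $\Delta(\alpha,\beta)=1$ immediately.
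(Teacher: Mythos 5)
Your proposal has two genuine gaps, each of which is fatal on its own.

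\textbf{Gap 1: isotoping $K$ off $F$.} You claim that $[K]\cdot[F]=0$ implies $K$ lifts to the infinite cyclic cover of $X$ and hence can be isotoped off $F$. The lifting is correct, but the conclusion does not follow: a lift of $K$ to $F\times\mathbb{R}$ is compact and lies in some $F\times[a,b]$, but nothing prevents $b-a\ge 1$, so the projected knot may still be forced to cross $F$. The algebraic intersection number being zero does not give geometric disjointness. In fact this step is exactly where the null-homotopy of $L$ in $Y$ is used in the paper's Lemma~\ref{DisjFibre}: one picks a taut representative $F'$ of the unique class $\theta\in\homo_2(M,\partial M-T)$ hitting $[F]$, and then invokes Lackenby's \cite[Theorem~A.21]{La} (which requires the null-homotopy hypothesis) to conclude $F'$ remains taut in $X$ and hence is isotopic to the fibre. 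Your argument uses only the homological condition and so cannot be right as stated.

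\textbf{Gap 2: deducing a product structure from a hierarchy.} After cutting along $F$ you assert that ``the hierarchy must terminate in product sutured manifolds, which together with tautness forces $(N,\delta)$ itself to be the product.'' This inference is false: by Gabai's theory \emph{every} taut sutured manifold admits a taut hierarchy terminating in products, so this property carries no information and certainly does not imply $(N,\delta)$ is itself a product. The hard content of the paper---Proposition~\ref{NoOrth} and the surrounding Section~\ref{SectProd}---is precisely a substitute for this step. It shows, via a careful analysis of the modified norm $y$ on surfaces in $G\times I$ (Lemma~\ref{SumLarge}), the taut-extension property (Proposition~\ref{DecomExt}), Gabai's foliation machinery, and Thurston's Euler class inequality \cite[Corollary~1]{T}, that $K$ pairs nontrivially with every nonzero class in $\homo_1(G,\partial G;\mathbb{Q})$, forcing $G$ to be an annulus. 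Nothing in your proposal reproduces this. You also do not address the case where $M$ fails to be $F_T$--atoroidal, which the paper handles separately using Lemma~\ref{BoundSolTor}, Boileau--Wang's degree-one map and fibration results, Property~P, and Gabai's classification of surgeries on knots in solid tori. The framing conclusion at the end is fine modulo the earlier steps, but it rests on a foundation that is not established.
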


\begin{cor}
Suppose $L$ is a nontrivial null-homotopic knot in a closed
$3$--manifold $Y$, $X$ is obtained by a $\frac pq$--surgery on
$L$, $\frac pq\ne0,\infty$. If $X$ fibres over the circle, then
$\frac pq\in\mathbb Z$, and every integer surgery on $L$ yields a
fibred manifold.
\end{cor}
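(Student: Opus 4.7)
My plan is to verify the homological hypothesis $[K]\cdot[F]=0$ of Theorem~\ref{FibredSurg} automatically from the null-homotopy of $L$, apply the theorem to obtain integrality, and then use the geometric conclusion (that $K$ sits on the fibre $F$ with surface framing $\beta$) to refibre the manifold after any integer surgery.

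First I would verify the homological condition. Null-homotopy implies $L$ is null-homologous, so the Seifert longitude $\lambda$ bounds in $Y\setminus\mathrm{Nd}(L)$ and $H_1(Y\setminus\mathrm{Nd}(L))\cong H_1(Y)\oplus\mathbb{Z}\langle\mu\rangle$. Writing $\alpha=p\mu+q\lambda$ with $p/q\ne 0,\infty$ (so $p\ne 0$), $\alpha$-filling imposes $p[\mu]=0$ in $H_1(X)$, making $[\mu]$ torsion. Any longitude of $K$ on $T$ has the form $r\mu+s\lambda$ with $|ps-qr|=1$, so $[K]=r[\mu]$ is also torsion. Because $H^1(X;\mathbb{Z})\cong\mathrm{Hom}(H_1(X),\mathbb{Z})$ is torsion-free for a closed $3$-manifold $X$, pairing the Poincar\'e dual of $[F]$ against the torsion class $[K]$ vanishes, giving $[K]\cdot[F]=0$. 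Theorem~\ref{FibredSurg} now yields $\Delta(\alpha,\beta)=|q|=1$, so $p/q\in\mathbb{Z}$, and identifies $K$ with a curve in $F$ whose $F$-framing equals $\beta=\mu$.

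For the second assertion, let $\alpha_n=n\mu+\lambda$ be any integer slope; then $\Delta(\alpha_n,\beta)=1$, so in the new surgery solid torus the two copies of $\beta$ bounding $F\setminus\mathrm{Nd}(K)$ are parallel longitudes and cobound a spanning annulus, producing a closed surface $\widetilde F\cong F$ in the surgered manifold $X_n$. The classical monodromy-modification construction---cut $X$ open along $F$ to get a product $F\times[0,1]$ containing $K$ at a single level, then observe that the modified gluing on the neighbourhood of $K$ differs from the original by iterated Dehn twists along $K$---identifies $\widetilde F$ as the fibre of a fibration $X_n\to S^1$. The main obstacle I anticipate is precisely this last refibring step: starting from the bare conclusion of Theorem~\ref{FibredSurg}, one must check that every integer slope on $L$ in the $(\mu,\lambda)$ basis really does correspond to a slope at distance $1$ from the $F$-framing on $K$, so that the Dehn-twist refibring construction applies uniformly across all integer surgeries.
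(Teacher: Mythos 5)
Your proposal is correct and follows essentially the same route as the paper: verify $[K]\cdot[F]=0$ by observing that $[K]$ is torsion in $H_1(X)$ (the paper phrases this as $K$ being rationally null-homologous), apply Theorem~\ref{FibredSurg} to get $\Delta(\alpha,\beta)=|q|=1$, and then use the fact that $\beta=\mu$ is the $F$-framing of $K$ so that every integer slope $n\mu+\lambda$ is at distance $1$ from that framing and the Dehn-twist refibring applies. The ``main obstacle'' you flag at the end is not in fact an obstacle---it is exactly the trivial computation $\Delta(n\mu+\lambda,\mu)=1$---so your argument closes cleanly.
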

\begin{proof}
Let $K\in X$ be the core of the surgery.  Since $\frac pq\ne0$,
$K$ is rationally null-homologous in $X$, hence the condition
(\ref{ZeroIntersect}) is satisfied. Let $\alpha,\beta$ be as in
Theorem~\ref{FibredSurg}, then $\Delta(\alpha,\beta)=1$, which
implies that $\frac pq\in\mathbb Z$. Since $\beta$ is the frame of
$K$ specified by $F$, every integer surgery on $L$ yields a fibred
manifold whose monodromy differs from the monodromy of $X$ by a
power of the Dehn twist along $K\subset F$.
\end{proof}

Our theorem is related to the following question of Michel
Boileau.

\begin{qusn}{\bf\cite[Problem~1.80C, Boileau]{Kirby}}\label{Boileau}
Let $L$ be a null-homotopic knot in a closed orientable
$3$--manifold $Y$. If a nontrivial surgery on $L$ yields a
manifold that fibres over the circle, does it follow that $L$ is a
fibred knot and the surgery is longitudinal?
\end{qusn}

This question was answered affirmatively in the case that $Y=S^3$
by Gabai \cite{G3}. Boileau and Wang showed that if the surgery is
fibred then either the surgery is longitudinal or $Y$ itself is
fibred \cite{BW}. In \cite{Ni} the question was answered
affirmatively for {\it null-homologous} knots of genus $>1$ in
$L$--spaces, using Heegaard Floer homology.

However, we note that there are simple examples of surgeries on
null-homotopic knots yielding fibred manifolds and satisfying the
homological restriction~(\ref{ZeroIntersect}), hence they are
negative examples to Question~\ref{Boileau}. A construction of
such examples can be given as follows. Take any unknotting number
one fibred knot $k\subset S^3$. There exists a circle
$\gamma\subset S^3-k$ which has linking number zero with $k$ such
that a $\pm1$ surgery on $\gamma$ yields a solid torus, which
means that there exists a winding number zero knot $L$ in the
solid torus $U$, such that a surgery on $L$ yields the fibred
manifold $S^3-k$. In order to construct knots in closed manifolds,
one can take any closed manifold $Y$ which fibres over the circle,
embed $U$ into $Y$ such that the core of $U$ is transverse to the
fibres, then $L\subset Y$ is a knot that satisfies the conditions
in Theorem~\ref{FibredSurg}.

%Theorem~\ref{FibredSurg} suggests us to reformulate
%Question~\ref{Boileau} to the following one.

%\begin{qusn}
%Suppose $L$ is a null-homotopic knot in a closed manifold $Y$,
%$Y_0$ is the manifold obtained by longitudinal surgery on $L$. Let
%$F$ be a Seifert surface for $L$, $\widehat F$ is its extension in
%$Y_0$. If $Y_0$ fibres over the circle with fibre in the homology
%class $[\widehat F]$, does it follow that $L$ is a fibred knot
%with fibre in the homology class $[F]$?
%\end{qusn}

The proof of Theorem~\ref{FibredSurg} uses Gabai's theory of taut
foliations and sutured manifold decomposition. Sutured manifold
theory has been successfully applied to study surgery on
null-homotopic knots by Lackenby \cite{La}. In addition to the use
of \cite{G2} as in \cite{La0,La} we borrow some ideas from
\cite{G3} and \cite{Gh}, which have been used to show that knot
Floer homology detects fibred knots \cite{Gh,Ni}.

The same argument can be used to prove the following theorem.

\begin{thm}\label{NormDecr}
Suppose $M$ is a compact $3$--manifold with boundary consisting of
tori, $T$ is a component of $\partial M$, $\alpha,\beta$ are two
different slopes on $T$, $X,Y$ are the manifolds obtained by
filling $T$ along $\alpha,\beta$, $K\subset X$ is the core of the
$\alpha$--filling. Let $i_X\co\homo_2(M,\partial
M-T)\to\homo_2(X,\partial X), i_Y\co\homo_2(M,\partial
M-T)\to\homo_2(Y,\partial Y)$ be the maps on homology induced by
inclusions.

Suppose $X$ fibres over the circle with fibre $F$ such that there
exists a $\theta\in\homo_2(M,\partial M-T)$ satisfying
$i_X(\theta)=[F]$. If the Thurston norm of $i_Y(\theta)$ is less
than the Thurston norm of $F$, then there is an ambient isotopy of
$X$ which takes $K$ to a curve in $F$. Moreover, $\beta$ coincides
with the frame on $K$ which is specified by the surface $F$. Hence
$\Delta(\alpha,\beta)=1$.
\end{thm}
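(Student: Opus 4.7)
The proof will mirror that of Theorem~\ref{FibredSurg}, using Gabai's sutured manifold hierarchy together with the fibration-detection ideas of \cite{G3, Gh, Ni}. Write $x_M$, $x_X$, $x_Y$ for the Thurston norms on $M$, $X$, $Y$.

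First, I would choose a properly embedded surface $\Sigma\subset M$, disjoint from $T$, that realizes $x_M(\theta)$. Included into $X$, $\Sigma$ represents $[F]$; since $F$ is a fibre and hence norm-minimizing in $X$, $-\chi(\Sigma)=x_M(\theta)\ge -\chi(F)$. Included into $Y$, $\Sigma$ represents $i_Y(\theta)$, but by hypothesis $x_Y(i_Y(\theta))<-\chi(F)\le -\chi(\Sigma)$, so there is a norm-realizing surface $G_Y\subset Y$ strictly less complex than $\Sigma$. After a standard isotopy $G_Y$ meets the core solid torus $V_\beta\subset Y$ only in meridian disks, and $G:=G_Y\cap M$ is a properly embedded surface in $M$ whose boundary on $T$ is a non-empty union of parallel copies of $\beta$. (Non-empty, because otherwise one quickly derives a contradiction to the $\Sigma$-minimality via the long exact sequence of the pair $(Y,M)$.)

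Next I would run Gabai's sutured manifold hierarchy. Endow $M$ with the sutured structure $(M,\gamma)$ whose suture on $T$ has slope equal to the trace $F\cap T$. Because $X$ fibres with fibre $F$ and $\Sigma$ attains the norm of $\theta$, the product structure $X\setminus F\cong F\times I$ restricts to a taut sutured manifold on $M$ cut along $\Sigma$; by the sutured-manifold fibration detection used in \cite{G3, Gh, Ni}, the hierarchy starting with $\Sigma$ is a product hierarchy. Now use $G$ as an alternative decomposing surface. Because $G$ represents a class whose $Y$-norm is strictly smaller, the hierarchy seen through the $Y$-filling must admit an additional norm-strict simplification, and the only way this is compatible with the product hierarchy on the $X$-side is for the suture slope $F\cap T$ to coincide with $\beta$ and for $G$ to differ from $\Sigma$ by an annular compression along a curve in $F$ isotopic to $K$.

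The main obstacle is this coherence step: identifying $F\cap T$ with $\beta$, and recognizing the compression by $V_\beta$-meridian disks as an annular compression along $K\subset F$. As in \cite{G3, Gh, Ni}, this requires careful matching of the two sutured hierarchies and is the most delicate part of the argument. Once it is in place, $F\cap V_\alpha$ must be an annulus $K\times I$ with boundary of slope $\beta$ on $T$; this gives the ambient isotopy of $K$ into $F$, identifies $\beta$ with the $F$-framing of $K$, and yields $\Delta(\alpha,\beta)=1$ because the meridian $\alpha$ of $V_\alpha$ meets any longitudinal framing of $K$ exactly once.
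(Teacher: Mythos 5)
Your proposal captures the right general flavor (isotope $K$ off $F$, cut $X$ along $F$ to get a product, run Gabai's sutured-manifold theory, and play the $X$-side hierarchy off against the $Y$-side norm drop), but there are real gaps that prevent it from being a proof.

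First, a concrete confusion: you propose to endow $M$ with a sutured structure ``whose suture on $T$ has slope equal to the trace $F\cap T$,'' but $F\cap T=\emptyset$ --- the whole first step of the argument (Lemma~\ref{DisjFibre2}) is to isotope $K$ off $F$, so that $F$ lives inside $M$ disjoint from $T$. In the paper the torus $T$ is simply a \emph{toral} component of $\gamma$, with no distinguished slope until the very end; the slope $\lambda$ specified by $F\supset K$ only appears after one has shown $K$ lies on $F$, at which point Theorem~\ref{Gabai} forces $\beta=\lambda$. Second, the step you flag as ``the main obstacle'' --- matching the two hierarchies so as to locate $K$ as a curve on $F$ --- is not a routine coherence check; it is essentially the entire content of Section~3. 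The paper packages it as Proposition~\ref{NoOrth}: after cutting along $F$ and peeling off the characteristic product region $E\times I$, one obtains $K\subset G\times I$, and the proposition says the intersection pairing $\sigma\cdot i_*([K])$ is nonzero for every nonzero $\sigma\in\homo_1(G,\partial G;\mathbb Q)$. Its proof is a delicate cut-and-paste / Euler-class argument in the style of \cite{G3}, and its applicability here hinges on the taut-extension property (Proposition~\ref{DecomExt2}), which is where the norm hypothesis $x(i_Y(\theta))<x(F)$ actually enters. None of this is supplied by your sketch. Third, your proposal silently assumes $M$ is $F_T$--atoroidal; the paper handles the case where it is not by producing an ``innermost'' torus $R$ I-cobordant to $T$, showing $R$ bounds a solid torus $U_X\supset K$ (Lemma~\ref{BoundSolTor}), and invoking Gabai's classification of surgeries in solid tori \cite{GSurg} to derive a contradiction. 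Without Proposition~\ref{NoOrth}, the taut-extension argument, and the non-atoroidal case, the proposal does not yet constitute a proof.
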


\begin{rem}
The conclusion $\Delta(\alpha,\beta)=1$ in the above theorems can
also be proved by the argument in \cite[Theorem~5.2]{La0}. The
conclusion of Theorem~\ref{NormDecr} has also been obtained by
John Luecke \cite{Lu} in the case when $Y$ is a solid torus. Our
argument in this paper is closer to Lackenby's, while Luecke's
argument involves the combinatorial techniques from \cite{GL}.
\end{rem}

\begin{rem}
In Theorem~\ref{NormDecr}, it is interesting to ask what happens
when $X$ is not fibred. One may naturally guess that $K$ can be
isotoped to a curve in some taut surface representing
$i_X(\theta)$, but this picture is not correct: Gabai told the
author a method of constructing negative examples. In spite of
this disappointing answer, the above guess is true in the special
case when $X$ and $Y$ are link complements related by a crossing
change, according to a result due to Scharlemann and Thompson
\cite[Proposition~3.1]{SchTh}.
\end{rem}

This work is motivated by works in Heegaard Floer homology
\cite{Gh,Ni}, but the argument here is quite classical, we do not
need Heegaard Floer homology and contact topology at all. The only
gauge theoretical aspect in this paper is the citation of Property
P \cite{KM}, which can be replaced by the Knot Complement Theorem
\cite{GL} if we assume there are no fake $3$--cells in $Y$. One
should even be able to eliminate foliations as in
\cite{Sch,La0,La}.

The paper is organized as follows. In Section~2 we will give some
preliminaries on sutured manifold decompositions. In Section~3 we
study knots in product manifolds via a method of Gabai \cite{G3}.
The key result in this section is Proposition~\ref{NoOrth}. The
proofs of the above two theorems are just its routine
applications, which are given in Section 4 and 5.

\

\noindent{\bf Acknowledgements.}\quad We are grateful to Michel
Boileau, David Gabai and Tao Li for some very interesting
discussions, to Marc Lackenby for helpful comments, and to John
Luecke for a detailed description of his earlier proof of a
special case of Theorem~\ref{NormDecr}. The author is partially
supported by an AIM Five-Year Fellowship. This research was
partially conducted during the period the author was employed by
the Clay Mathematics Institute as a Liftoff Fellow, and when the
author visited University of Minnesota, Twin Cities. The author
wishes to thank the above institutions for their supports, and
special thanks are due to Tian-Jun Li for his hospitality.

\section{Preliminaries on sutured manifolds}

Sutured manifold decomposition was introduced by Gabai in
\cite{G1} in order to construct taut foliations. In this section,
we will briefly review some basic definitions about sutured
manifolds, then discuss the main result in \cite{G2}.

\begin{defn}
A {\it sutured manifold} $(M,\gamma)$ is a compact oriented
3--manifold $M$ together with a set $\gamma\subset \partial M$ of
pairwise disjoint annuli $A(\gamma)$ and tori $T(\gamma)$. The
core of each component of $A(\gamma)$ is a {\it suture}, and the
set of sutures is denoted by $s(\gamma)$.

Every component of $R(\gamma)=\partial M-\mathrm{int}(\gamma)$ is
oriented. Define $R_+(\gamma)$ (or $R_-(\gamma)$) to be the union
of those components of $R(\gamma)$ whose normal vectors point out
of (or into) $M$. The orientations on $R(\gamma)$ must be coherent
with respect to $s(\gamma)$, hence every component of $A(\gamma)$
lies between a component of $R_+(\gamma)$ and a component of
$R_-(\gamma)$.
\end{defn}

\begin{defn}
Let $S$ be a compact oriented surface with connected components
$S_1,\dots,S_n$. We define
$$x(S)=\sum_i\max\{0,-\chi(S_i)\}.$$
Let $M$ be a compact oriented 3--manifold, $A$ be a compact
codimension--0 submanifold of $\partial M$. Let
$h\in\homo_2(M,A)$. The {\it Thurston norm} $x(h)$ of $h$ is
defined to be the minimal value of $x(S)$, where $S$ runs over all
the properly embedded surfaces in $M$ with $\partial S\subset A$
and $[S]=h$.
\end{defn}

\begin{defn}
A properly embedded surface $S\subset M$ is {\it taut}, if $S$ is
incompressible and Thurston norm minimizing in $\homo_2(M,\partial
S)$. A sutured manifold $(M,\gamma)$ is {\it taut}, if $M$ is
irreducible, and $R(\gamma)$ is taut.
\end{defn}

\begin{defn}
Let $(M,\gamma)$ be a sutured manifold, and $S$ a properly
embedded surface in M, such that no component of $\partial S$
bounds a disk in $R(\gamma)$ and no component of $S$ is a disk
with boundary in $R(\gamma)$. Suppose that for every component
$\lambda$ of $S\cap\gamma$, one of 1)--3) holds:

1) $\lambda$ is a properly embedded non-separating arc in
$\gamma$.

2) $\lambda$ is a simple closed curve in an annular component $A$
of $\gamma$ in the same homology class as $A\cap s(\gamma)$.

3) $\lambda$ is a homotopically nontrivial curve in a toral
component $T$ of $\gamma$, and if $\delta$ is another component of
$T\cap S$, then $\lambda$ and $\delta$ represent the same homology
class in $\homo_1(T)$.

Then $S$ is called a {\it decomposition surface}, and $S$ defines
a {\it sutured manifold decomposition}
$$(M,\gamma)\stackrel{S}{\rightsquigarrow}(M',\gamma'),$$
where $M'=M-\mathrm{int}(\mathrm{Nd}(S))$ and
\begin{eqnarray*}
\gamma'\;\;&=&(\gamma\cap M')\cup \mathrm{Nd}(S'_+\cap
R_-(\gamma))\cup
\mathrm{Nd}(S'_-\cap R_+(\gamma)),\\
R_+(\gamma')&=&((R_+(\gamma)\cap M')\cup S'_+)-\mathrm{int}(\gamma'),\\
R_-(\gamma')&=&((R_-(\gamma)\cap M')\cup
S'_-)-\mathrm{int}(\gamma'),
\end{eqnarray*}
where $S'_+$ ($S'_-$) is that component of
$\partial\mathrm{Nd}(S)\cap M'$ whose normal vector points out of
(into) $M'$.
\end{defn}

\begin{defn}
A decomposition surface is called a {\it product disk}, if it is a
disk which intersects $s(\gamma)$ in exactly two points. A
decomposition surface is called a {\it product annulus}, if it is
an annulus with one boundary component in $R_+(\gamma)$, and the
other boundary component in $R_-(\gamma)$.
\end{defn}

\begin{defn}
An {\it I-cobordism} between closed connected surfaces $T_0$ and
$T_1$ is a compact $3$--manifold $V$ such that $\partial V=T_0\cup
T_1$ and for $i=0,1$ the induced maps
$j_i\co\homo_1(T_i)\to\homo_1(V)$ are injective.
\end{defn}

It is noted in \cite[Lemma 1.5]{G2} that for an I-cobordism $V$
between $T_0$ and $T_1$, the maps $j_1,j_2$ induce isomorphisms on
$\homo_1(\cdot;\mathbb Q)$. One then easily sees that $V$ has the
same rational homology type as $T_0\times I$.

\begin{defn}
Let $M$ be a compact $3$--manifold, $S$ a properly embedded
surface in $M$, and $T$ a toral component of $\partial M$ such
that $T\cap S=\emptyset$. $M$ is {\it $S_T$--atoroidal} if
boundary parallel tori are the only surfaces which are I-cobordant
to $T$ by cobordisms contained in $M-S$. If the boundary component
$T$ is understood, then we say that $M$ is $S$--atoroidal.
\end{defn}

The main result in \cite{G2} is as follows.

\begin{thm}[Gabai]\label{Gabai}
Let $M$ be a compact irreducible $3$--manifold whose boundary
consists of tori. $T$ is a component of $\partial M$ and $S$ is a
taut surface representing a nontrivial element in
$\homo_2(M,\partial M-T)$, $S\cap T=\emptyset$. If $M$ is
$S_T$--atoroidal, then except at most one slope the manifold $N$
obtained by filling $M$ along a slope in $T$ possesses a taut
foliation $\mathscr F$ such that $S$ is a compact leaf of
$\mathscr F$, and the core $C$ of the filling is transverse to
$\mathscr F$, hence $C$ is of infinite order in $\pi_1(N)$.
\end{thm}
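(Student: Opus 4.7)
The plan is to construct, directly on $M$, a taut foliation $\mathscr F$ which contains $S$ as a compact leaf and is transverse to $T$, meeting $T$ in a foliation by simple closed curves of some single slope $\alpha_0$. Once such an $\mathscr F$ is available, any Dehn filling along a slope $\alpha\neq\alpha_0$ can be capped off by extending $\mathscr F$ over the attached solid torus via a Reeb-type foliation whose core $C$ is transverse to the foliation. The resulting foliation of $N$ is taut because $S$ gives a closed transversal hitting every leaf, and the standard Novikov--Rosenberg argument shows that a loop transverse to a taut foliation of a closed $3$--manifold cannot be of finite order in $\pi_1(N)$.

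To produce $\mathscr F$ I would first regard $M$ as a sutured manifold $(M,\gamma)$ by taking $T(\gamma)=T$, cutting $M$ open along $S$, and using the two copies of $S$ as $R_+(\gamma)$ and $R_-(\gamma)$ together with appropriate annular sutures on the remaining torus boundary components of $M$. Because $S$ is taut and $M$ is irreducible, $(M,\gamma)$ is a taut sutured manifold. Gabai's existence theorem from \cite{G1} then produces a hierarchy
$$(M,\gamma)\stackrel{S_1}{\rightsquigarrow}(M_1,\gamma_1)\stackrel{S_2}{\rightsquigarrow}\cdots\stackrel{S_n}{\rightsquigarrow}(M_n,\gamma_n)$$
ending in a product sutured manifold, on which the trivial product foliation is taut. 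The crux is to arrange this hierarchy so that each decomposing surface $S_i$ is disjoint from $T$, so that $T$ persists as a toral sutured component throughout. This is where $S_T$--atoroidality is decisive: any $S_i$ meeting $T$ essentially would, after cutting, contribute a torus I--cobordant to $T$ inside $M-S$, and by hypothesis such a torus must be boundary parallel, so such intersections can be removed by isotopy.

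Reversing the hierarchy and gluing the product foliations back together by Gabai's standard procedure yields $\mathscr F$. Because $T$ survived as a toral component of the sutured structure at every stage, $\mathscr F$ is transverse to $T$ and restricts there to a foliation of $T$ by parallel simple closed curves of some well-defined slope $\alpha_0$; this $\alpha_0$ is the unique exceptional slope of the statement, since filling along it might create a torus leaf with non--taut behaviour. The main obstacle I anticipate is the technical step of keeping the hierarchy disjoint from $T$ while simultaneously preserving tautness at every stage, and then showing that the reconstructed foliation is a genuine smooth foliation on $T$ with a single slope rather than something more delicate such as a Denjoy-type minimal set. Once these points are handled, the Dehn filling extension for slopes $\alpha\neq\alpha_0$ and the Novikov step producing infinite order of $C$ are routine.
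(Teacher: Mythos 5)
Your proposal follows the paper's broad strategy (a sutured hierarchy of $M$ keeping the decomposing surfaces disjoint from $T$, a use of $S_T$--atoroidality, and then a foliation construction via \cite[Theorem~5.1]{G1}), but it misplaces the decisive use of the $S_T$--atoroidal hypothesis and it identifies the exceptional slope by a mechanism that is not justified. In the paper's sketch the surfaces $S_i$ are chosen disjoint from $T$ with no appeal to $S_T$--atoroidality whatsoever: $T$ remains a toral component of $\gamma_i$ at every stage, and one decomposes along surfaces in the complement of $T$ until no further useful taut decomposition exists. The hypothesis enters only at the terminus: at that point the component of $(M_n,\gamma_n)$ containing $T$ is an I--cobordism $H$ between $T$ and a second torus $P\subset M-S$, and $S_T$--atoroidality forces $H\cong T^2\times I$. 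Your claim that "any $S_i$ meeting $T$ essentially would, after cutting, contribute a torus I--cobordant to $T$ inside $M-S$" is not a correct picture of what intersecting a toral $\gamma$--component does: cutting $T$ along essential circles produces annuli, not a new torus I--cobordant to $T$, so this mechanism fails and in any case is unnecessary.

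The second gap is the exceptional slope. You construct a foliation of $M$ transverse to $T$, read off the slope $\alpha_0$ of $\mathscr F|_T$, and declare it the exceptional slope. That is not how the paper locates it, and the identification is not self-evident. The paper fills $T$ first: $H=T^2\times I$ becomes $\widehat H=D^2\times S^1$, and the exceptional slope is the unique one under which $s(\delta_n)\cap\widehat H$ becomes a family of meridians of $\widehat H$. For every other slope one decomposes $\widehat H$ along a meridian disk, completing a taut hierarchy of the filled manifold $N$, and applies \cite[Theorem~5.1]{G1} directly to $N$. Your foliate-then-extend order is strictly more delicate (you correctly flag the Denjoy-type issue on $T$ yourself), and you would additionally have to prove that the slope read off from $\mathscr F|_T$ coincides with the combinatorially defined bad filling slope. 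The fill-then-foliate order sidesteps both difficulties and is the route to a rigorous argument.
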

\begin{proof}[Sketch of proof]
There exists a sequence
$$(M,\partial M)=(M_0,\gamma_0)\stackrel{S=S_1}{\rightsquigarrow}
(M_1,\gamma_1)\stackrel{S_2}{\rightsquigarrow}\cdots\stackrel{S_n}{\rightsquigarrow}(M_n,\gamma_n)$$
of sutured manifold decompositions with the following properties:

1) Each $(M_i,\gamma_i)$ is taut and each separating component of
$S_{i+1}$ is a product disk.

2) All $S_i$'s are disjoint from $T$. (Hence some component of
$\gamma_n$ is the torus $T$.)

3) $(M_n,\gamma_n)$ is a union of a product sutured manifold and a
sutured manifold $(H,\delta)$, where $H=T^2\times I$,
$T=T^2\times0$, $\delta\cap(T^2\times1)\ne\emptyset$.

The idea is to inductively construct sutured manifold
decompositions satisfying 1) and 2) until such construction can no
longer be done. Now the last sutured manifold $(M_n,\gamma_n)$
should be a union of a product sutured manifold and a sutured
manifold $(H,\delta)$, where $H$ is an I-cobordism between $T$ and
another torus $P\subset M-S$. Since $M$ is $S_T$--atoroidal, $H$
must be $T^2\times I$.

Fix a slope on $T$, we fill each $T\subset
\partial M_i$ along this slope by a solid torus, to get the
sequence
$$\mathcal N:\quad(N,\partial M-T)=(N_0,\delta_0)\stackrel{S_1}{\rightsquigarrow}(N_1,\delta_1)\stackrel{S_2}{\rightsquigarrow}
\cdots\stackrel{S_n}{\rightsquigarrow}(N_n,\delta_n).$$ By 3), the
component $\widehat H$ of $N_n$ containing $T$ satisfies $\widehat
H=D^2\times S^1$ and $s(\delta_n)\cap\widehat H$ is a union of
$2r\;(\ne0)$ parallel essential simple closed curves in $\partial
D^2\times S^1$. If the slope on $T$ is not the one that kills
$s(\delta_n)\cap\widehat H$ in $\pi_1(\widehat H)$, then one can
decompose $(N_n,\delta_n)$ along a $D^2\times\mathrm{point}$ to
get a product sutured manifold. Hence the above sequence $\mathcal
N$ is extended to a sutured manifold hierarchy.

Now we can apply \cite[Theorem~5.1]{G1} to the sutured manifold
hierarchy gotten in the last paragraph to obtain the desired
foliations.
\end{proof}

\section{Knots in product manifolds}\label{SectProd}

In this section we will study knots in product manifolds. Let $F$
be a compact surface, $K\subset F\times I$ is a knot which is not
contained in a $3$--ball, (hence $F\ne D^2\;\text{\rm or}\;S^2$,)
$M_1=F\times I-\mathrm{int}(\mathrm{Nd}(K))$, $T=\partial Nd(K)$.

\begin{lem}\label{BoundSolTor}
Suppose $R\subset M_1$ is a torus which is I-cobordant to $T$,
then $R$ bounds a solid torus $U$ in $F\times I$, $K\subset U$.
\end{lem}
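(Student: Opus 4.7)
The plan is to show that $R$ must be compressible in $F\times I$, then use the standard classification of compressible tori in irreducible $3$-manifolds to extract a solid torus bounded by $R$, and finally to use the I-cobordism $V$ together with the non-emptiness of $\partial(F\times I)$ to force this solid torus onto the side of $R$ containing $K$.

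I would first establish compressibility. Since $\pi_1(F\times I)=\pi_1(F)$ has a $\mathbb{Z}^2$-subgroup only when $F=T^2$, any torus in $F\times I$ is compressible for $F\ne T^2$. In the remaining case $F=T^2$, an incompressible $R$ would be isotopic to a horizontal torus $T^2\times\{t\}$; letting $A$ be the closure of the component of $T^2\times I-R-\mathrm{int}(\mathrm{Nd}(K))$ that contains $T$, its boundary is $R\cup T\cup(T^2\times\{i\})$ for some $i\in\{0,1\}$. As $V\subset A$ has $\partial V=R\cup T\subset\partial A$, a clopen argument on $\mathrm{int}(V)\subset\mathrm{int}(A)$ forces $V=A$, contradicting $\partial V\ne\partial A$. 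Hence $R$ is compressible.

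Since $F\times I$ is irreducible (as $F\ne S^2$), the standard fact that a compressible torus in an irreducible $3$-manifold bounds a solid torus produces $U\subset F\times I$ with $\partial U=R$. Let $W_0$ and $X_0$ denote the closures of the two components of $F\times I-R$, labelled so that $V\cup\mathrm{Nd}(K)\subset W_0$. Then either $U\subset W_0$ or $U\subset X_0$.

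Suppose for contradiction that $U\subset X_0$. Then $U$, $V$, and $\mathrm{Nd}(K)$ have pairwise disjoint interiors with $U\cap V=R$, $V\cap\mathrm{Nd}(K)=T$, and $U\cap\mathrm{Nd}(K)=\emptyset$; the three gluings make each of $R$ and $T$ interior, so $U\cup V\cup\mathrm{Nd}(K)$ has empty topological boundary and is a closed, connected $3$-submanifold of the connected manifold $F\times I$. By the standard clopen argument this forces $U\cup V\cup\mathrm{Nd}(K)=F\times I$, contradicting $\partial(F\times I)\ne\emptyset$. Therefore $U\subset W_0$, and applying the clopen argument once more to $\mathrm{int}(U)\subset\mathrm{int}(W_0)$ (using $\partial U=\partial W_0=R$) gives $U=W_0$. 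Thus $W_0$ is a solid torus and $K\subset\mathrm{Nd}(K)\subset W_0=U$, as claimed. The assumption that $K$ is not contained in a $3$-ball enters implicitly to preclude the degenerate case in which this solid torus would itself be contained in a ball. The main technical step is the closed-submanifold argument ruling out $U\subset X_0$; the other steps are routine applications of standard $3$-manifold topology.
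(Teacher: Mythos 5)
Your overall strategy is the same as the paper's --- show $R$ is compressible, identify the solid torus it bounds, and use the presence of $\partial(F\times I)$ together with the cobordism $V$ to force the solid torus onto the side containing $K$. The compressibility step is fine (and your handling of the $F=T^2$ case, via the clopen argument on $V\subset A$, is a valid alternative to the paper's observation that $[R]=0$ in $\homo_2(F\times I)$ rules out $R$ being isotopic to $F\times t$), and the final clopen argument identifying $U$ with $W_0$ is correct.

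However, there is a genuine gap at the step where you invoke ``the standard fact that a compressible torus in an irreducible $3$--manifold bounds a solid torus.'' That is not the correct statement: a compressible torus in an irreducible $3$--manifold either bounds a solid torus \emph{or} lies in a $3$--ball, and in the second case the side of $R$ with torus boundary may be a cube-with-knotted-hole rather than a solid torus. (Concretely, take a nontrivial knot exterior $E$ embedded in a ball in $F\times I$; its boundary is a compressible torus whose only complementary region with boundary a single torus is $E$, which is not a solid torus.) This is exactly why the paper splits into the two cases ``tube in $B$'' and ``tube not in $B$'': in the first case $R$ bounds only a cube-with-knotted-hole $U\subset B$, and the hypothesis that $K$ is not contained in a $3$--ball is used there --- it gives $K\not\subset B$, hence $K\not\subset U$, and then the boundary count $\partial(U\cup V)=T$ contradicts $\partial M_1\supset\partial(F\times I)\neq\emptyset$. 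Your closing sentence gestures at the hypothesis being used ``implicitly'' to rule out a ``solid torus contained in a ball,'' but that is not where the hypothesis is needed: a solid torus $U$ bounded by $R$ and contained in a ball would still satisfy $U=W_0\supset K$ and the lemma would hold. What the hypothesis actually rules out is the case where $R$ bounds no solid torus at all, only a cube-with-knotted-hole $U$; in that case your own argument shows $U=W_0\supset K$, and since $U$ lies in a ball this contradicts $K$ not being in a ball. You should state the correct dichotomy, run your boundary/clopen argument in the solid-torus case, and invoke the no-ball hypothesis explicitly to eliminate the cube-with-knotted-hole case.
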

\begin{proof}
Let $V$ be the I-cobordism between $T$ and $R$. If $R$ is
incompressible in $F\times I$, then $R$ is isotopic to $F\times
t$, which is impossible since $[R]=0$ in $\homo_2(F\times I)$.

Now $R$ is compressible in $F\times I$, let $S$ be the sphere
obtained by compressing $R$, then $R$ is obtained by adding a tube
to $S$. $S$ bounds a ball $B$ in $F\times I$.

If the tube is contained in $B$, then $R$ bounds a
cube-with-knotted-hole $U\subset B$ in $F\times I$. $K\subset
F\times I-U$ since $K\not\subset B$. Now $M_1=U\cup V$ has only
one boundary component $T$, which contradicts to the fact that
$M_1=F\times I-\mathrm{int}(\mathrm{Nd}(K))$.

If the tube is not contained in $B$, then $R$ bounds a solid torus
$U$. The same argument as in the last paragraph shows $K\subset
U$.
\end{proof}

\begin{defn}
Suppose $E$ is a compact subsurface of a compact surface $F$. $E$
is {\it essential} if no component of
$\mathrm{Fr}(E)=E\cap\overline{F-E}$ is a circle that bounds a
disk in $F$ or an proper arc that cobounds a disk in $F$ with an
arc in $\partial F$.
\end{defn}

Let $E\times I\subset M_1$ be the {\it characteristic product
pair}
 of $M_1$. Namely, $E$ is a maximal (up to isotopy) compact
essential subsurface of $F$, such that $K$ can be isotoped in
$F\times I$ to be disjoint from $E\times I$. Let
$M_2=\overline{M_1-E\times I}$, $G=\overline{F-E}$. Now $K$ is a
knot in $G\times I$. By the choice of $E$, its complement $G$
should be connected. Let $\gamma_1=(\partial F\times I)\cup T$,
$\gamma_2=(\partial G\times I)\cup T$, then $(M_1,\gamma_1),
(M_2,\gamma_2)$ are sutured manifolds.

\begin{defn}
Suppose $S\subset M_2$ is a non-separating decomposition surface
which gives a taut decomposition of $(M_2,\gamma_2)$, $S\cap
T=\emptyset$. $S$ is {\it tautly extendable} if $S$ also gives a
taut decomposition of $(G\times I,\partial G\times I)$. $M_2$ has
the {\it taut-extension property}, if every non-separating
decomposition surface $S\subset M_2-T$ which gives a taut
decomposition of $(M_2,\gamma_2)$ is tautly extendable.
\end{defn}

\begin{prop}\label{NoOrth} Suppose $M_2$ has the taut-extension
property. The inclusion $K\subset G\times I$ induces a map
$$i_*\co\homo_1(K;\mathbb Q)\to\homo_1(G;\mathbb Q).$$
If $\sigma$ is a nonzero element in $\homo_1(G,\partial G;\mathbb
Q)$, then $\sigma\cdot i_*([K])\ne0$.
\end{prop}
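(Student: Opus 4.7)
The plan is to argue by contradiction with the maximality of the characteristic product pair $E\times I$. Suppose $\sigma\in\homo_1(G,\partial G;\mathbb Q)$ is nonzero with $\sigma\cdot i_*([K])=0$. After multiplying by a positive integer, represent $\sigma$ by an oriented properly embedded $1$--manifold $c\subset G$, chosen so that no component of $c$ bounds a disk in $G$ or is $\partial$--parallel, and placed in a level $G\times\{\frac{1}{2}\}$ transverse to $K$. The vertical surface $c\times I\subset G\times I$ then meets $K$ in finitely many points with algebraic sum equal to $\sigma\cdot i_*([K])=0$; pair these intersections off by sign and tube each pair along a subarc of $K$ pushed slightly off $K$. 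The resulting surface $S$ is disjoint from $K$, so $S\subset M_2$ and can be isotoped off $T$, and its class in $\homo_2(G\times I,\partial(G\times I);\mathbb Z)$ agrees with $[c\times I]\ne 0$.

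Replace $S$ by a Thurston-norm minimizer in its class in $\homo_2(M_2,\partial M_2-T;\mathbb Z)$, passing to a connected non-separating component with non-trivial class, and use Gabai's principle that a norm-minimizing surface with admissible boundary in an irreducible taut sutured manifold yields a taut decomposition to arrange that $S$ is a decomposition surface of $(M_2,\gamma_2)$ whose decomposition is taut. By the taut-extension property, this decomposition is also taut in $(G\times I,\partial G\times I)$; hence $S$ is Thurston-norm minimizing in $G\times I$, and since $x(c\times I)=0$ we get $x(S)=0$.

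A taut surface of zero Thurston norm in the product $G\times I$ is a disjoint union of essential disks and annuli, each of which, being incompressible and $\partial$--incompressible, is ambient isotopic in $G\times I$ to a vertical surface $\alpha\times I$ for some properly embedded arc or essential circle $\alpha\subset G$. Hence $S$ is ambient isotopic in $G\times I$ to a vertical surface $C\times I$ for a non-empty $1$--manifold $C\subset G$, and $C$ contains a component essential in $G$ since $[C\times I]\ne 0$. Transporting $K$ along the inverse of this ambient isotopy yields an isotopic copy of $K$ disjoint from $C\times I$, so $E\cup\mathrm{Nd}(C)\subset F$ is an essential subsurface strictly larger than $E$ from which an isotopic copy of $K$ is disjoint, contradicting the maximality of $E$.

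The main obstacle is the middle step: producing a valid, non-separating, taut decomposition surface $S$ of $(M_2,\gamma_2)$ from the tubed topological surface, which requires careful choices of $c$ and of the tubings (to avoid disk components, bigons, and redundant tubings) together with the norm-minimizing-to-taut-decomposition principle. The classification of zero-norm incompressible surfaces in a product $G\times I$ as vertical, and the final maximality argument, are standard.
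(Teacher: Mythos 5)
There is a genuine gap, and it is precisely the step you flagged as the main obstacle, but the obstruction is more structural than the combinatorial issues (bigons, disk components) you anticipate. Gabai's machinery does not produce a taut decomposition surface in the given class $j^{-1}([c\times I])\in\homo_2(M_2,\partial M_2-T)$; it only guarantees one after adding sufficiently many copies of $R_+(\gamma_2)$, whose image in $\homo_2(G\times I,\partial(G\times I))$ is $[G]$. This is exactly why the paper organizes the argument around the families $\mathcal S_m(\pm C)$ indexed by the algebraic intersection with $a\times I$ and proves Lemma~\ref{ExistTaut} only for $m$ sufficiently large. The surface $S$ you obtain therefore has class $[c\times I]+m[G]$ for some large $m$, so even after invoking the taut-extension property and norm-minimality in $G\times I$ you get $x(S)\ge m\,x(G)$, not $x(S)=0$, unless $\chi(G)=0$. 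Consequently $S$ need not be a union of disks and annuli, need not be vertical, and the contradiction with the maximality of the characteristic product pair $E\times I$ does not materialize. (A secondary issue: norm-minimality coming from a taut decomposition is in $\homo_2(G\times I,\partial S)$ with $\partial S$ fixed, whereas $c\times I$ has different boundary, so even the comparison of norms across the two representatives is not immediate.)

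This gap is what forces the paper into the more delicate route: it compares two families $\mathcal S_p(+C)$ and $\mathcal S_q(-C)$, establishes the strict inequality $y(\mathcal S_p(+C))+y(\mathcal S_q(-C))>(p+q)y(G)$ of Lemma~\ref{SumLarge} by a cut-and-paste analysis due to Gabai, then builds the associated taut foliations on $G\times S^1$ via \cite[Section~5]{G1} and derives the contradiction from Thurston's Euler class inequality \cite[Corollary~1]{T}. If you want to salvage a direct argument along your lines, you would have to explain how to extract a zero-norm decomposition surface without the $+m[G]$ correction, which is not available in general; the case $\chi(G)=0$ is exactly the degenerate case where your argument would go through, and the paper handles the remaining cases by the Euler-class route instead.
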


Otherwise there exists a non-separating simple oriented curve
(which is a circle or a proper arc) $C\subset G$ such that
$[C]\cdot i_{*}([K])=0$.

{\noindent\bf Case 1.} The curve $C$ is a proper arc with ends on
different components of $\partial G$.

Suppose $\sigma,\tau$ are the two components of $\partial G$ that
contain $\partial C$, $a\in\sigma-\partial C$ is a point. For a
proper surface $S\subset M_2$, let $\partial_i(S)=S\cap (G\times
i)$, $i=0,1$, $\partial_v(S)=S\cap(\partial G\times I)$.

Let $\mathcal S_m(+C)$ be the set of properly embedded oriented
surfaces $S\subset G\times I$, such that $S\cap K=\emptyset$,
$\partial_0 S=C\times0$, $\partial_1 S=-C\times1$, and the
algebraic intersection number between $S$ and $a\times I$ is $m$.
Here $-C$ denotes the same curve $C$, but with opposite
orientation. Similarly, let $\mathcal S_m(-C)$ be the set of
properly embedded surfaces $S\subset G\times I$, such that $S\cap
K=\emptyset$, $\partial_0 S=-C\times0$, $\partial_1 S=C\times1$,
and the algebraic intersection number of $S$ with $a\times I$ is
$m$. Since $[C]\cdot i_{*}([K])=0$, $\mathcal S_{m}(\pm
C)\ne\emptyset$.

\begin{lem}\label{ExistTaut}
When $m$ is sufficiently large, there exists a connected surface
$S\subset\mathcal S_m(+C)$ such that $S$ gives a taut
decomposition of $M_2$. The same statement holds for $S_m(-C)$.
\end{lem}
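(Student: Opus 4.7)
The plan is to produce $S$ as a minimum-complexity representative of $\mathcal{S}_m(+C)$ and to show that for $m$ sufficiently large, the minimizer automatically satisfies the taut-decomposition conditions for $(M_2,\gamma_2)$. First I would verify non-emptiness of each $\mathcal{S}_m(+C)$. The product rectangle $C\times I$ has trivial algebraic intersection with $a\times I$ (because $a\notin C$) and with $K$ (because $[C]\cdot i_{*}([K])=0$), so after tubing away the geometric intersections with $K$, one obtains an element of $\mathcal{S}_0(+C)$. Band-summing $m$ small rectangles that run transversely across $a\times I$ inside the annulus $\sigma\times I\subset \partial G\times I$, with consistent orientation, upgrades this to a surface in $\mathcal{S}_m(+C)$.

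Next I would pick $S\in\mathcal{S}_m(+C)$ of minimal Thurston norm $x(S)$, and among such surfaces minimize a secondary complexity (e.g.\ the number of components or of intersections with some fixed family of discs). Innermost-disc/outermost-arc surgeries in $M_2$ (none of which alter $[S]$ or the intersection number with $a\times I$) make $S$ incompressible and kill any closed components that bound balls or are boundary-parallel; $M_2$ is irreducible because $G\times I$ is and $K$ is non-trivial in $F\times I$. Intersections of $S$ with the torus $T\subset\gamma_2$ are empty since $S\cap K=\emptyset$, so the only sutured-decomposition issue lives on $\partial G\times I$. The hypothesis that $m$ is large enters here: after outermost-disc surgery on each annular component of $\partial G\times I$, we may assume $S\cap(\partial G\times I)$ consists only of coherent non-separating arcs, because any cancelling pair of oppositely oriented arcs can be removed by such a surgery without disturbing $\partial_0 S$, $\partial_1 S$, or the $m$ transverse points on $a\times I$. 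The same device is used to reduce $S$ to a connected surface: once $m$ is large enough that the boundary data on $\sigma\times I$ has many parallel arcs, any extra component of $S$ can be tubed into the main component through an arc on $\partial G\times I$ without increasing $x(S)$.

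Finally I would verify that $(M_2,\gamma_2)\stackrel{S}{\rightsquigarrow}(M_2',\gamma_2')$ is taut. Incompressibility of $S$ and irreducibility of $M_2'$ have already been arranged. For norm-minimality of $R_\pm(\gamma_2')$, I would run the standard Gabai/Thurston doubling argument: a hypothetical norm-reducing surface $R'$ for $R_+(\gamma_2')$ could be cut-and-pasted with a parallel copy of $R_-(\gamma_2')$ along $s(\gamma_2')$ and reassembled with $S$ to produce a new element of $\mathcal{S}_m(+C)$ of strictly smaller Thurston norm, contradicting the choice of $S$. The $-C$ case is identical after reversing orientations on the $I$-factor.

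The main obstacle in this plan is the interplay between step three and the final doubling argument: the surgeries that tidy up $S\cap\gamma_2$ must be performed without ever breaking norm-minimality inside $\mathcal{S}_m(+C)$, and we must know that the large-$m$ hypothesis leaves enough ``room'' on $\sigma\times I$ to cancel all bad arcs and perform all connecting tubes while keeping the algebraic intersection with $a\times I$ pinned at $m$. It is this bookkeeping, rather than the existence of a norm-minimizer, that is the genuinely delicate part of the proof.
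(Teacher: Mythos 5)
The paper does not prove this lemma; it simply cites \cite[Theorem~3.13]{G1} and \cite[Lemma~6.4]{Ni}, so your proposal is a from-scratch reconstruction. Your overall strategy (minimize complexity in $\mathcal S_m(+C)$, then invoke a double-curve-sum argument for tautness) is the right one and matches Gabai's, but the crucial last step is wrong as stated, and the error is exactly where the hypothesis ``$m$ sufficiently large'' must do its work.

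Concretely: if $R'$ is a norm-reducing competitor for $R_+(\gamma_2')$ in the decomposed manifold, then cutting and pasting $R'$ with $R_-(\gamma_2')$ and $S$ does \emph{not} produce a surface in $\mathcal S_m(+C)$. The surface $R_+(\gamma_2')$ contains a push-off $S'_+$ of $S$, so the reassembled surface differs from $S$ by (roughly) a copy of $R_\pm(\gamma_2)$, i.e.\ by a copy of $G$; its algebraic intersection with $a\times I$ is $m\pm 1$, not $m$. You therefore cannot contradict minimality inside $\mathcal S_m(+C)$ directly. The correct argument is the one behind Gabai's Theorem~3.13: study the function $m\mapsto y(\mathcal S_m(+C))$. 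Failure of tautness of the decomposition along a $y$-minimizer for $\mathcal S_m(+C)$ forces strict concavity of this function at $m$, i.e.\ $y(\mathcal S_{m-1})+y(\mathcal S_{m+1})<2y(\mathcal S_m)$; on the other hand the bound $y(\mathcal S_{m+1})\le y(\mathcal S_m)+y(G)$ together with $y\ge0$ shows the function is eventually affine, so strict concavity can happen at only finitely many $m$. That is where ``$m$ sufficiently large'' enters. In your write-up the $m$-large hypothesis is spent only on arc-tidying and connecting tubes, which misplaces its real role; moreover your connectedness argument (``tube extra components into the main one along $\sigma\times I$'') needs justification that the tubes do not raise $y$ --- in the cited sources connectedness is instead a byproduct of tautness of the decomposition combined with the fact that $S$ is non-separating.

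One smaller issue: you minimize the Thurston norm $x$, but the relevant complexity in this paper and in Ni's Lemma~6.4 is the modified quantity $y$, which also weights the number of arcs on $\partial G\times 0$; the later estimates (Lemma~\ref{SumLarge}) are phrased in terms of $y$, so the minimizer you produce should minimize $y$ rather than $x$.

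Your non-emptiness argument and the clean-up of $S\cap\gamma_2$ by innermost/outermost surgeries are fine. The gap is entirely in the tautness step, and it is not cosmetic: without passing to the sequence indexed by $m$ and exploiting the near-linearity of $y(\mathcal S_m)$, the doubling argument does not close.
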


This lemma is implicitly contained in \cite[Theorem~3.13]{G1}, and
the details are given in \cite[Lemma~6.4]{Ni}.

Suppose $S\subset M_2$ is a properly embedded surface which is
transverse to $\partial G\times0$. For any component $S_0$ of $S$,
we define
$$y(S_0)=\max\{\frac{|S_0\cap(\partial G\times0)|}2-\chi(G),0\},$$
and let $y(S)$ be the sum of $y(S_0)$ with $S_0$ running over all
components of $S$. Let $y(\mathcal S_m(\pm C))$ be the minimal
value of $y(S)$ for all $S\in\mathcal S_m(\pm C)$. If
$S\in\mathcal S_m(\pm C)$, let $S'$ be the surface obtained by
doing oriented cut-and-paste to $S$ and $G\times1$, it is obvious
that $S'\in\mathcal S_{m+1}(\pm C)$ and $y(S')=y(S)+y(G)$. Hence
we have
$$y(\mathcal S_{m+1}(\pm C))\leq y(\mathcal S_{m}(\pm C))+
y(G).$$

The following key lemma is essentially \cite[Lemma~6.5]{Ni}, the
argument in the proof is due to Gabai \cite{G3}.

\begin{lem}\label{SumLarge}
For any positive integers $p,q$,
$$y(\mathcal S_p(+C))+y(\mathcal
S_q(-C))>(p+q)y(G).$$
\end{lem}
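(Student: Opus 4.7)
The plan is to argue by contradiction using Gabai's oriented sum trick \cite{G3}, as adapted in \cite[Lemma~6.5]{Ni}. I would begin by choosing surfaces $S_1\in\mathcal S_p(+C)$ and $S_2\in\mathcal S_q(-C)$ realising $y(\mathcal S_p(+C))$ and $y(\mathcal S_q(-C))$. Using Lemma~\ref{ExistTaut} in combination with the additivity bound $y(\mathcal S_{m+1}(\pm C))\le y(\mathcal S_m(\pm C))+y(G)$ (which lets us shift the choice of $S_1,S_2$ into a regime where taut decomposition surfaces exist), I would arrange that $S_1$ and $S_2$ both give taut decompositions of $(M_2,\gamma_2)$. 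The taut-extension property of $M_2$ then promotes these to taut decompositions of the product sutured manifold $(G\times I,\partial G\times I)$.

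Next I would form the oriented sum $S=S_1+S_2\subset M_2$ by smoothing along the cancelling boundary arcs $C\times\{0\}$ and $C\times\{1\}$, whose orientations in $\partial_0 S_1,\partial_0 S_2$ and in $\partial_1 S_1,\partial_1 S_2$ are opposite. A direct bookkeeping of Euler characteristics against intersection points with $\partial G\times 0$ then gives $y(S)=y(S_1)+y(S_2)$, while the algebraic intersection count with $a\times I$ identifies $[S]=(p+q)[G]$ in $\homo_2(G\times I,\partial G\times I)\cong\mathbb Z$. Since $(p+q)[G]$ has $y$-norm exactly $(p+q)y(G)$, realised by $(p+q)$ parallel horizontal copies of $G$, this already forces $y(S_1)+y(S_2)\geq(p+q)y(G)$; the lemma therefore amounts to the strictness of this inequality.

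To rule out equality, suppose $y(S_1)+y(S_2)=(p+q)y(G)$. Then $S$ is a $y$-minimiser of its class, and the tautness of $S_1,S_2$ in $(G\times I,\partial G\times I)$ forces $S$ to be isotopic in $G\times I$ to $(p+q)$ parallel horizontal copies of $G$. Since $S\subset M_2$ is disjoint from $K$, this isotopy places $K$ inside a single slab $G\times J$ of the parallel-fibre decomposition. The main obstacle, and the crux of the argument, is to combine this product slab with the existing characteristic product pair $E\times I\subset M_1$ to isotope $K$ off an essential subsurface of $F$ strictly containing $E$, contradicting the maximality of $E$. This contradiction delivers $y(S_1)+y(S_2)>(p+q)y(G)$.
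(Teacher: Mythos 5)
Your proposal deviates from the paper's actual proof of this lemma in a way that leaves the key step unjustified. In the paper, Lemma~\ref{SumLarge} is established \emph{without} invoking tautness of $S_1,S_2$ at all: the paper chooses $y$-minimizers $S_1,S_2$, normalizes them via Claims~1--3 (removing boring components, trivial intersection circles, and Euler-characteristic-zero subcollections), forms the oriented double-curve sum $P$, and then uses the height function $\varphi$ on $G\times I-P$ to stratify $G\times I$ into slabs $J_i$ with interfaces $P_i$. The assumed inequality $y(P)=y(S_1)+y(S_2)\le(p+q)y(G)$, together with $P$ being homologous to $(p+q)G$, forces each $P_i$ to be $y$-minimizing, and Claims~1--3 are precisely what is needed to conclude each $P_i$ is parallel to $G\times 0$. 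Your version replaces this combinatorial normalization with an appeal to tautness, but tautness of $S_1$ and $S_2$ as decomposition surfaces of $(G\times I,\partial G\times I)$ does \emph{not} by itself imply that the double-curve sum $S$ is isotopic to $(p+q)$ parallel horizontal copies of $G$ (e.g.\ stray tori or closed components of zero Euler characteristic would not be ruled out by a pure norm count). You have also tangled two separate strands of the paper: Lemma~\ref{ExistTaut} and the taut-extension property enter later, in the proof of Proposition~\ref{NoOrth}, where they are combined with Lemma~\ref{SumLarge} and Thurston's Euler class bound; they are not ingredients of the proof of Lemma~\ref{SumLarge} itself.

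There is a second gap at the end. Even granting the parallel-slab picture and $K\subset J_r$, the contradiction with the maximality of $E$ does not follow just from $K$ lying in one slab. The paper's argument identifies the traces $S_1\cap J_r$ as a collection $C_r\times I$ of product disks and annuli in $J_r-\mathrm{int}(\mathrm{Nd}(K))\cong M_2$; since some component of $C_r$ is nonseparating (because $[C_r]=[C]\neq 0$ in $H_1(G,\partial G)$), this exhibits a nontrivial product disk or annulus in $M_2$, contradicting the definition of the characteristic product pair. Your sketch stops at ``$K$ inside a single slab'' and signals that the derivation of the contradiction is ``the crux,'' which is an accurate self-assessment: that step, together with the justification that $S$ is isotopic to parallel copies of $G$, are exactly the nontrivial parts of the lemma, and neither is supplied.
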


Suppose $S_1\in\mathcal S_p(+C),S_2\in\mathcal S_q(-C)$, $p,q>0$,
and $y(S_1)=y(\mathcal S_p(+C))$, $y(S_2)=y(\mathcal S_q(-C))$.
Isotope $S_1,S_2$ so that they are transverse, and
$|(\partial_vS_1)\cap (\partial_vS_2)|$ is minimal. The following
lemma is obvious.

\begin{lem}
On $\sigma\times I$, $\partial_vS_1$ and $\partial_vS_2$ have
exactly $p+q+1$ intersection points, and their orientations are
the same. The same statement holds for $\tau\times I$.
\end{lem}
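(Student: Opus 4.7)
My plan is to analyze the structure of the oriented $1$--manifolds $\partial_vS_i\cap(\sigma\times I)$ on the annulus $A:=\sigma\times I$ and count their intersection points directly. I would first observe that $\partial_vS_i\cap A$ is a properly embedded oriented $1$--manifold whose boundary consists of exactly two points: since $\partial_0S_i=\pm C\times 0$ and $\partial_1S_i=\mp C\times 1$ each meet $\sigma$ transversely at the unique point $x$ of $\partial C\cap\sigma$, the boundary of $\partial_vS_i\cap A$ equals $\{(x,0),(x,1)\}$. Consequently $\partial_vS_i\cap A$ is the disjoint union of one spanning arc $\alpha_i$ joining $(x,0)$ and $(x,1)$ (with orientation induced from that of $\partial S_i$) together with a finite, possibly empty, collection of simple closed curves.

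Next I would reduce to a standard form. Any isotopy of $\partial_vS_i$ in $\partial G\times I$ extends through a collar of $\partial G\times I$ in $G\times I$ to an isotopy of $S_i$ preserving the class $\mathcal S_p(+C)$ or $\mathcal S_q(-C)$, so the minimality of $|\partial_vS_1\cap\partial_vS_2|$ legitimizes the standard innermost-disk and bigon arguments on $A$. These rule out null-homotopic closed components of $\partial_vS_i\cap A$ and bigons between $\partial_vS_1\cap A$ and $\partial_vS_2\cap A$. Any remaining closed component is essential, hence parallel to $\sigma$, and contributes $\pm 1$ to the algebraic intersection $\partial_vS_i\cdot(a\times I)$; such a circle can be banded into $\alpha_i$ by a further isotopy of $S_i$ inside the collar, changing that arc's winding by $\pm 1$ while preserving the algebraic intersection with $a\times I$. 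Iterating, I may assume $\partial_vS_i\cap A=\alpha_i$.

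With this standard form in place, the winding number of $\alpha_i$ around the $\sigma$--direction of $A$ is pinned down by $\alpha_i\cdot(a\times I)=p$ (respectively $q$). The orientation data $\partial_0S_1=C\times 0,\ \partial_1S_1=-C\times 1$ versus $\partial_0S_2=-C\times 0,\ \partial_1S_2=C\times 1$ force $\alpha_1$ and $\alpha_2$ to be traversed in opposite $I$--directions while their $\sigma$--windings point the same way. Passing to the universal cover $\mathbb R\times I$ of $A$ and taking straight-line representatives, a distinguished lift of $\alpha_1$ is the segment from $(0,1)$ to $(p,0)$ and the $m$--th lift of $\alpha_2$ is the segment from $(m,0)$ to $(m+q,1)$; these cross if and only if $-q\le m\le p$, yielding exactly $p+q+1$ intersection points. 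At each such point the tangent-vector determinant equals $p\cdot 1-(-1)\cdot q=p+q>0$, so all $p+q+1$ intersection signs agree. The identical argument on $\tau\times I$ supplies the remaining assertion.

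The step I expect to be the main obstacle is the standard-form reduction, and specifically the absorption of essential closed components of $\partial_vS_i\cap A$ into $\alpha_i$; this rests on realizing bands in $A$ by isotopies of $S_i$ inside a collar of $\partial G\times I$ that do not move $S_i$ out of $\mathcal S_p(\pm C)$, which is routine but the one place where minimality is genuinely used.
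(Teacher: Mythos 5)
Your final computation in the universal cover is correct, and the orientation bookkeeping (that $\alpha_1,\alpha_2$ run in opposite $I$--directions while winding in the same $\theta$--direction, because $\partial_0S_1=C\times0=-\partial_0S_2$ while $S_1\cdot(a\times I)$ and $S_2\cdot(a\times I)$ are both positive) is right, giving $p+q+1$ intersection points all of the same sign. The paper treats the lemma as immediate, and your route to it is essentially the intended one. However, the ``standard form reduction'' paragraph, which you yourself flagged as the weak point, does contain two genuine errors.

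First, a closed component of $\partial_vS_i\cap A$ is a whole boundary circle of $S_i$, so no isotopy of $S_i$ can make it disappear; the innermost-disk/bigon argument justified by minimality of $|\partial_vS_1\cap\partial_vS_2|$ removes circles of the \emph{intersection} $\partial_vS_1\cap\partial_vS_2$, not null-homotopic circle components of $\partial_vS_i$ itself. This is harmless for the count, but not for the reason you gave: after bigon removal such a circle is disjoint from all of $\partial_vS_j\cap A$ ($j\neq i$), so it contributes nothing, and there is no need (and no way) to delete it.

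Second, and more seriously, the ``banding'' of an essential closed component of $\partial_vS_i\cap A$ into $\alpha_i$ is not an ambient isotopy of $S_i$ --- it merges two distinct boundary circles of $S_i$ and changes the topology of the surface, so it leaves the isotopy class and cannot be invoked here. What actually disposes of this case is that essential circle components cannot occur at all: an essential circle $c\subset A=\sigma\times I$ separates $\sigma\times0$ from $\sigma\times1$, so any arc in $A$ joining $(x,0)$ to $(x,1)$ must meet $c$; but $\alpha_i$ and $c$ are disjoint pieces of the embedded $1$--manifold $\partial S_i$. With that separation argument replacing the banding step, your reduction to $\partial_vS_i\cap A=\alpha_i$ (up to irrelevant null-homotopic circles) is immediate, and the rest of the proof is fine.
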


Now $S_1\cap S_2$ consists of some circles and exactly $p+q+1$
arcs, each arc has one end on $\sigma\times I$ and the other end
on $\tau\times I$. Note that two arcs among them are
$C\times\{0,1\}$.

Perform oriented cut-and-paste to $S_1,S_2$, we get a proper
surface $P$, then we isotope $P$ slightly such that it lies in
$\mathrm{int}(M_2)$. It is easy to show that
$\chi(P)=\chi(S_1)+\chi(S_2)-2$, hence
\begin{equation}\label{yAdditive}
y(P)=y(S_1)+y(S_2).
\end{equation}

\begin{defn}
A properly embedded surface in $M_2$ is {\it boring}, if its Euler
characteristic is nonnegative, and its algebraic intersection
number with $a\times I$ is $0$.
\end{defn}

{\noindent\bf Claim 0.} There is exactly one component of $S_1$
whose intersection with $G\times\{0,1\}$ is nonempty. Moreover,
this component is not a disk or annulus.

Since $S_1\cap (G\times\{0,1\})=C\times\{0,1\}$, the component of
$S_1$ which contain $C\times0$ must also contain $C\times1$. The
second statement holds since $M_2$ contains no nontrivial product
disks or product annuli.

{\noindent\bf Claim 1.} We can assume that no component of
$\partial S_1,\partial S_2$ is the boundary of a disk in $\partial
G\times I$. Moreover, we can assume that $S_1,S_2$ contain no
boring components.

If one component of $\partial S_1$ is the boundary of a disk in
$\partial G\times I$, without loss of generality we can assume no
other components of $\partial S_1,\partial S_2$ are contained in
the disk, then we can cap off this component of $\partial S_1$ by
the disk to get a new surface $S'_1\subset\mathcal S_p(+C)$,
$y(S'_1)\le y(S_1)$. This proves the first statement.

Suppose $B$ is a boring component of $S_1$. We can remove $B$
without increasing $y(S_1)$, and the new surface is still
contained in $\mathcal S_p(+C)$.

{\noindent\bf Claim 2.} We can assume that no component of
$S_1\cap S_2$ bounds a disk in $S_1$ or $S_2$, hence no component
of $P$ is a boring sphere or disk.

If a component of $S_1\cap S_2$ bounds a disk in $S_1$, then this
component also bounds a disk in $S_2$ since $S_2$ is
incompressible. Since $M_2$ is irreducible, we can isotope $S_1$
to eliminate the components of $S_1\cap S_2$ that bound disks in
$S_1$ or $S_2$. If a component $Q$ of $P$ is a boring sphere or
disk, then $Q$ is a component of $S_1$ or $S_2$, since $S_1\cap
S_2$ contains no circle that bounds a disk in $S_1$ or $S_2$. Now
we apply Claim 1 to get a contradiction.

{\noindent\bf Claim 3.} We can assume that there is no subsurface
$Q$ of $P$, such that $Q$ is the union of some components of $P$,
$Q\cdot(a\times I)=0$, and $\chi(Q)=0$.

Suppose $Q$ is such a subsurface of $P$, by Claims 1 and 2 $Q$ is
the union of two collections of annuli or tori
$A_1,A_2,\dots,A_{m}$ and $B_1,\dots,B_n$, where $A_i\subset S_1$,
$B_j\subset S_2$. Let
\begin{eqnarray*}
S_1'&=&(S_1-\bigcup_{i=1}^m A_i)\cup \bigcup_{j=1}^n (-B_{j})\\
S_2'&=&(S_2-\bigcup_{j=1}^n B_{j})\cup \bigcup_{i=1}^m (-A_{i}).
\end{eqnarray*}
Here $-A_i,-B_j$ means $A_i,B_j$ with opposite orientation.

If the surface $A_1$ is a component of $S_1$, by Claim~1 we have
$A_1\cdot(a\times I)\ne0$, then $A_1$ would separate $G\times0$
from $G\times1$, which contradicts to Claim~0.

Now $S_2\cap A_1\ne\emptyset$, a small isotopy will arrange that
$|S_1'\cap S_2'|<|S_1\cap S_2|$. Moreover, $y(S_1')=y(S_1)$,
$y(S_2')=y(S_2)$. We want to show that $S_1'\in\mathcal S_p(+C)$,
$S_2'\in\mathcal S_q(-C)$. Obviously, $\partial_0 S_1'=\partial_0
S_1=C\times 0$, $\partial_1 S_1'=\partial_1 S_1=-C\times 1$.
Moreover, $S_1'\cdot (a\times I) =S_1\cdot (a\times I)$ since
$Q\cdot(a\times I)=0$. Thus $S_1'\in\mathcal S_p(+C)$. Similarly,
$S_2'\in\mathcal S_q(-C)$. Therefore, we can replace $S_1,S_2$
with $S_1',S_2'$, then continue our argument.

Now we are in a position to prove Lemma~\ref{SumLarge}.

\begin{proof}[Proof of Lemma~\ref{SumLarge}]
Suppose $y(\mathcal S_p(+C))+y(\mathcal S_q(-C))\le(p+q)y(G)$. Let
$S_1,S_2$ be as above, and suppose they satisfy Claims 1--3.
Define a function $$\varphi\co (G\times I-P)\to\mathbb Z$$ as
follows. When $z\in G\times0$, $\varphi(z)=0$. In general, given
$z\in G\times I-P$, choose a path from $G\times0$ to $z$,
$\varphi$ is defined to be the algebraic intersection number of
this path with $P$.

Any closed curve in $G\times I$ should have zero algebraic
intersection number with any proper surface in
$G\times\mathrm{int}(I)$, thus $\varphi$ is well-defined.
Moreover, the value of $\varphi$ on $G\times1$ is $p+q$.

Let $J_i$ be the closure of $\{x\in (G\times I
-P)|\:\varphi(x)=i\}$, $P_i=J_{i-1}\cap J_i$. Thus
$P=\sqcup_{i=1}^{m}P_i$ for some $m\ge p+q$, and
$\cup_{k=0}^{i-1}J_k$ gives a homology between $G\times0$ and
$P_i$. $P$ is homologous to $(p+q)G$ in $G\times I$, $G\times0$ is
Thurston norm minimizing in $G\times I$,
$y(P)=y(S_1)+y(S_2)\le(p+q)y(G)$, so we must have $y(P_i)=y(G)$
for each $i$, and $m=p+q$ except possibly when $\chi(G)=0$. By
Claims~1--3, we conclude that $m=p+q$ and $P_i$ is parallel to
$G\times0$ in $G\times I$.

Suppose $K\subset J_r$, then $J_r-\mathrm{int}(\mathrm{Nd}(K))$ is
homeomorphic to $M_2$. Since $P$ is gotten by doing cut-and-paste
to $S_1,S_2$, we can isotope $S_1$ so that $S_1\cap J_i$ consists
of product annuli and disks. We denote $S_1\cap J_i$ by $C_i\times
I$, where $C_i$ is the collection of some curves in $P_i$.
Obviously, $[C_i]$ is homologous to $[C]$ in $\homo_2(G,\partial
G)$. Since $[C]\ne0$, at least one component of $C_i$ is
homologically nontrivial, which implies that
$J_r-\mathrm{int}(\mathrm{Nd}(K))=M_2$ contains a nontrivial
product disk or annulus, a contradiction.
\end{proof}

\begin{proof}[Proof of Proposition~\ref{NoOrth} in Case 1]
By Lemma~\ref{ExistTaut}, when $m$ is large there exist
$S_1\in\mathcal S_m(+C)$, $S_2\in\mathcal S_m(-C)$, such that they
give taut decompositions of $M_2$. By the taut-extension property,
$S_1,S_2$ also give taut decompositions of $G\times I$. Gabai's
work in \cite[Section~5]{G1} then implies that there exist two
taut foliations $\mathscr F_1,\mathscr F_2$ of $G\times I$, such
that $G\times\{0,1\}$ are compact leaves of the foliations, and
$\mathscr F_1,\mathscr F_2$ are transverse to $\partial G\times
I$.

Glue $G\times 0$ to $G\times 1$ by the identity, we get two taut
foliations $\mathscr F'_1,\mathscr F_2'$ of $G\times S^1$. The two
surfaces $S_1,S_2$ are glued to two surfaces $S'_1,S'_2$. We have
$\chi(S'_i)=\chi(S_i)-1=-y(S_i)$.

Let $e(\mathscr F)$ be the Euler class of a foliation $\mathscr
F$. As in the proof of \cite[Theorem~1.4]{Gh}, we have
\begin{eqnarray*}
\chi(S'_1)&=\langle e(\mathscr F_1'),[S_1']\rangle&=\langle e(\mathscr F_1'),[C\times S^1]\rangle+m\chi(G),\\
\chi(S'_2)&=\langle e(\mathscr F_2'),[S_2']\rangle&=\langle
e(\mathscr F_2'),-[C\times S^1]\rangle+m\chi(G).
\end{eqnarray*}

By Lemma~\ref{SumLarge}, $\langle e(\mathscr F_1'),[C\times
S^1]\rangle+\langle e(\mathscr F_2'),-[C\times S^1]\rangle<0$,
hence one summand on the left hand side is nonzero, which
contradicts to \cite[Corollary~1]{T}.
\end{proof}

{\noindent\bf Case 2.} The curve $C$ is a circle or an arc with
ends in the same component of $\partial G$.

If $C$ is an arc with ends in the same component of $\partial G$,
we can connect the two ends by an arc in $\partial G$ to get a
closed curve $\widehat C$. $\widehat C$ is homologous to $C$ in
$\homo_1(G,\partial G)$, so we can just work with $\widehat C$.
The proof when $C$ is a circle is essentially the same as in Case
1, it is even slightly simpler at some technical points. (For
example, we can just work with the Thurston norm $x$, and do not
need its modification $y$.) We will not give the details of the
proof.

\section{Surgery on null-homotopic knots}

In this section, we are going to prove Theorem~\ref{FibredSurg}.
The notation is as in the statement of Theorem~\ref{FibredSurg}.

\begin{prop}{\bf(Boileau--Wang, \cite[Proposition~3.2]{BW})}\label{DegOne}
Suppose $P$ is a compact $3$--manifold, and $k$ is a
null-homotopic knot in $P$. If $Q$ is obtained by Dehn surgery on
$k$, then there is a proper degree--$1$ map from $Q$ to $P$. Let
$k'\subset Q$ be the core of the surgery, then the map can be
chosen such that its restriction to $Q-\mathrm{Nd}(k')$ is a
homeomorphism onto $P-\mathrm{Nd}(k)$.
\end{prop}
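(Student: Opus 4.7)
The plan is to build $f\co Q\to P$ by an honest identification on the exteriors, followed by an extension across the new solid torus. Both complements $P-\mathrm{int}(\mathrm{Nd}(k))$ and $Q-\mathrm{int}(\mathrm{Nd}(k'))$ are canonically identified with the same knot exterior $E$, and I would define $f$ to be the identity on $E$. The content is then to extend $f$ over the surgery solid torus $V=\mathrm{Nd}(k')$, with boundary values prescribed by the inclusion $\partial V=\partial E\hookrightarrow P$, in such a way that the global map has degree one.

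The key consequence of $k$ being null-homotopic in $P$ is that every slope on $\partial\mathrm{Nd}(k)$ is null-homotopic in $P$. Indeed, the meridian $\mu$ of $k$ bounds a disk in $\mathrm{Nd}(k)$, while the longitude $\lambda$ is freely homotopic to $k$, so both lie in the kernel of $\pi_1(\partial E)\to\pi_1(P)$; consequently every integer combination $p\mu+q\lambda$ does too. In particular the meridian $\mu'$ of $V$, which under the surgery gluing is the surgery slope $\alpha$, bounds a singular disk $h\co D^2\to P$.

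I would then use $h$ to extend $f$ across a meridian disk $D'\subset V$ for $k'$. Cutting $V$ along a thickening of $D'$ leaves a $3$--ball $B^3$ whose boundary $2$--sphere decomposes as two parallel copies of $D'$, both mapped to $P$ by the same $h$ (with opposite induced orientations as pieces of $\partial B^3$), joined along their boundaries to an annulus $A\subset\partial V$ that maps identically into $\partial E\subset P$. The annulus $A$ is homotopy equivalent to a slope on $\partial E$ and so is null-homotopic in $P$ by the previous paragraph; combined with the agreement of the two disk pieces, the boundary map $\partial B^3\to P$ is null-homotopic, which can be realised concretely by filling $B^3=D^2\times I$ with a continuous family of singular disks $h_t$ whose boundaries sweep out the annular portion of $\partial V$. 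Extend $f$ over $B^3$ using this null-homotopy.

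For the degree count, take a regular value $p$ in the interior of $E$. Since $f|_E$ is a homeomorphism onto $E\subset P$, the point $p$ has exactly one preimage in $Q-V$, with sign $+1$. Arranging the extension of $f|_V$ so that its image lies in a small regular neighborhood of $\mathrm{Nd}(k)\cup h(D^2)\subset P$ chosen disjoint from $p$, we see $p$ has no other preimages, so $\deg f=1$. The main obstacle I expect is the construction of the coherent family of singular disks required to fill in over $B^3$; this is a $\pi_2$-level obstruction-theoretic check that rests on the vanishing of $\pi_1(\partial E)\to\pi_1(P)$ and on the flexibility one has in choosing among null-homotopies of $\alpha$. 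Once the extension is in place, the properness and degree computation are routine because $Q$ and $P$ are compact.
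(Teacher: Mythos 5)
The paper itself does not prove this proposition: it is quoted from Boileau--Wang \cite{BW} with only the remark that the hypotheses ``closed'' and ``irreducible'' may be dropped, so there is no in-paper argument to compare against. Your outline---identify the two exteriors, take $f$ to be the identity there, and extend across the new solid torus $V$---is the natural one, and the observation that $\pi_1(\partial\mathrm{Nd}(k))\to\pi_1(P)$ is trivial (so the surgery slope $\alpha$ bounds a singular disk $h$) is correct. But the extension over the remaining $3$--ball is a genuine gap, and the flexibility you invoke cannot close it: if $h'$ is another null-homotopy of $\alpha$ with $[h']-[h]=\sigma\in\pi_2(P)$, the two disk faces of $\partial B^3$ change by $+\sigma$ and $-\sigma$ (they carry opposite orientations inside $\partial B^3$), so the obstruction class in $\pi_2(P)$ is \emph{independent} of the choice of $h$. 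For irreducible orientable $P$ one has $\pi_2(P)=0$ by the Sphere Theorem and there is nothing to check, but the remark in the paper is precisely about dropping irreducibility, so the vanishing must actually be argued.

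What makes the obstruction vanish is something stronger than the $\pi_1$--statement: the inclusion $\partial\mathrm{Nd}(k)\hookrightarrow P$ is null-homotopic \emph{as a map}, because it factors through $\mathrm{Nd}(k)\hookrightarrow P$ and $\mathrm{Nd}(k)$ deformation retracts to the null-homotopic circle $k$. This lets you slide the annular face of $\partial B^3$, rel its boundary, onto a thin collar of $\alpha$ inside $\partial\mathrm{Nd}(k)$, after which the two copies of $h$ cancel. In fact once you notice this you can bypass the meridian disk entirely: $f|_{\partial V}\co\partial V\to\partial\mathrm{Nd}(k)\hookrightarrow P$ is null-homotopic, so by the homotopy extension property for the cofibration $\partial V\hookrightarrow V$ it extends over $V$ in one step. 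Your degree count also needs a small repair: an arbitrary null-homotopy will not confine the image to a neighborhood of $\mathrm{Nd}(k)\cup h(D^2)$. Instead fix $p$ in the interior of the exterior and away from $\mathrm{Nd}(k)$; since $\pi_1(P-p)\cong\pi_1(P)$ and a null-homotopy of $k$ may be perturbed off $p$, the inclusion $\mathrm{Nd}(k)\hookrightarrow P-p$ is still null-homotopic, so the extension over $V$ can be taken into $P-p$. Then $f^{-1}(p)$ is the single point $p$, giving $\deg f=1$, and the same perturbation argument (targeting $\mathrm{int}(P)$) gives properness.
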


Here a map $f\co Q\to P$ is {\it proper} if $f^{-1}(\partial
P)=\partial Q$. Note that in Boileau--Wang's original paper the
result is stated for closed irreducible $3$--manifolds, but the
extra conditions are not necessary for the proof.

\begin{lem}\label{IrrCompl}
Let $M=Y-\mathrm{int}(\mathrm{Nd}(L))$ be the exterior of the
knot, then $M$ is irreducible.
\end{lem}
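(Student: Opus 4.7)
The plan is to assume $M$ admits an essential $2$-sphere $S$ and derive a contradiction by combining the irreducibility of the fibred manifold $X$ with the null-homotopy of $L$ in $Y$. Since $X$ fibres over $S^1$ with fibre $F$ (taken not to be a sphere; the $S^2$-fibre case requires a separate short argument using $[K]\cdot[F]=0$), $X$ is irreducible, so $S$ bounds a $3$-ball $B\subset X$. If $K\not\subset B$ then $B\subset M$, contradicting essentialness of $S$ in $M$; hence $K\subset B$. Let $M_1$ be the component of $M\setminus S$ containing $T=\partial\mathrm{Nd}(L)$. Capping the sphere boundary of $M_1$ by a $3$-ball identifies $M_1^+:=M_1\cup D^3$ with $S^3-\mathrm{int}(\mathrm{Nd}(K))$, realising $K$ as a knot in $S^3$. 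The $\alpha$-filling of $M_1^+$ is this $S^3$, while the $\beta$-filling is $B_Y^+:=(M_1\cup \mathrm{Nd}(L))\cup D^3 = S^3_\beta(K)$.

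Next I transport the null-homotopy of $L$ across $S$. Take a singular disk $f\colon D^2\to Y$ with $f|_{\partial D^2}=L$, make it transverse to $S$, and perform the standard innermost-circle argument on $f^{-1}(S)\subset\mathrm{int}(D^2)$: each innermost circle bounds a subdisk of $D^2$ whose image lies on one side of $S$, and can be replaced by a subdisk of $S$ (using $\pi_1(S)=0$) and then pushed slightly off $S$. Iterating, I arrange $f(D^2)\cap S=\emptyset$, so the null-homotopy is supported in the $L$-side $B_Y$ of $S$. Capping off a sphere boundary preserves $\pi_1$, so $L$ is null-homotopic in $B_Y^+=S^3_\beta(K)$.

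Now I derive the contradiction through a fundamental-group computation followed by Property~P. Choose a longitude $\lambda_\beta$ of the $\beta$-filling solid torus on $T$ (an embedded curve with $\lambda_\beta\cdot\beta=\pm1$); the core $L$ is freely homotopic to $\lambda_\beta$ in $\mathrm{Nd}(L)$, so $\lambda_\beta=1$ in $\pi_1(B_Y^+)$. Combined with the automatic vanishing of $\beta$ there, the whole subgroup $\pi_1(T)$---generated by $\beta$ and $\lambda_\beta$---maps to zero in $\pi_1(B_Y^+)=\pi_1(M_1^+)/\langle\langle\beta\rangle\rangle$. In particular the meridian $\alpha$ of $K$, viewed in $\pi_1(M_1^+)$ via $T$, lies in $\langle\langle\beta\rangle\rangle$; since the $\alpha$-filling of $M_1^+$ yields $S^3$ we have $\langle\langle\alpha\rangle\rangle=\pi_1(M_1^+)$, forcing $\pi_1(M_1^+)=\langle\langle\beta\rangle\rangle$ and hence $\pi_1(B_Y^+)=1$. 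Therefore $S^3_\beta(K)$ is a homotopy $3$-sphere, and because $\beta\neq\alpha$ Property~P~\cite{KM} makes $K$ the unknot in $S^3$. Then $B_Y^+=S^3$ and $L$ is the core of a Heegaard solid torus in $S^3$, hence the unknot; its spanning disk may be pushed out of the cap-off ball into $B_Y\subset Y$, contradicting the nontriviality of $L$ in $Y$.

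The hard part will be the fundamental-group bookkeeping in the last step---the identification $L\sim\lambda_\beta$ together with the normal-closure chase that yields $\pi_1(B_Y^+)=1$---so that Property~P can be invoked correctly; the exceptional $S^2$-fibre case is handled by a short separate argument using $[K]\cdot[F]=0$ directly.
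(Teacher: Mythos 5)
Your argument follows the same overall strategy as the paper's: locate an essential sphere $S\subset M$, use irreducibility of $X$ to make $S$ bound a ball $B\supset K$, then feed the resulting picture of $K$ as a knot in $S^3$ (and $L$ as a null-homotopic knot in the corresponding piece of $Y$) into Property P. Where you diverge is in how you conclude that $B_Y^+=S^3_\beta(K)$ is a homotopy sphere. The paper simply invokes its Proposition~\ref{DegOne} (Boileau--Wang): a null-homotopic knot surgery gives a proper degree-one map $B\to B'$, which, with \cite[Lemma~15.12]{Hem}, makes $B'$ a homotopy cell. You instead run a self-contained normal-closure computation: from $L\sim\lambda_\beta$ null-homotopic and $\beta$ trivially killed, $\pi_1(T)\to\pi_1(B_Y^+)$ is trivial, so $\alpha\in\langle\langle\beta\rangle\rangle$, and since $\langle\langle\alpha\rangle\rangle=\pi_1(M_1^+)$ (the $\alpha$-filling is $S^3$) this forces $\pi_1(B_Y^+)=1$. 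That computation is correct and is a reasonable substitute for citing the degree-one-map lemma. You also make explicit the innermost-circle transport of the null-homotopy past $S$, which the paper glosses over when it asserts ``$L$ is a null-homotopic knot in $B'$''; that is a genuine small gap you fill correctly.

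The real gap in your write-up is the $S^2$-fibre case, which you defer with the remark that it is ``handled by a short separate argument using $[K]\cdot[F]=0$ directly.'' Two problems here. First, the paper does not use the hypothesis $[K]\cdot[F]=0$ anywhere in the proof of this lemma; the suggestion that it is the key to the sphere-fibre case is a red herring. Second, when $X=S^2\times S^1$ the essential sphere $S\subset M$ can be nonseparating, and your ``ball'' set-up simply does not apply there --- you must cut $X$ and $Y$ along $S$, cap off the two sphere boundaries, note the $X$-side becomes $S^3$, and run the Property~P argument again on the capped-off manifolds (this is exactly what the paper does). The separating sub-case is fine by prime decomposition (a separating sphere in $S^2\times S^1$ still bounds a ball), but the nonseparating sub-case has to be argued and is not a one-line observation from the homological hypothesis; as written your proof is incomplete there.
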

\begin{proof}
We first consider the case that $X\ne S^2\times S^1$, thus $X$ is
irreducible. If $S$ is an essential sphere in $M$, then $S$ bounds
a $3$--ball $B$ in $X$, and $B\supset K$. Hence $S$
 bounds a compact $3$--manifold $B'$ in $Y$, such that $L$ is a
 null-homotopic knot in $B'$, and a nontrivial surgery on $L$
 yields $B$.

By Proposition~\ref{DegOne}, there is a proper degree--$1$ map
from the ball $B$ to $B'$, hence $B'$ is a homotopy $3$--cell (see
\cite[Lemma~15.12]{Hem}). In other words, a nontrivial surgery on
$K\subset B$ yields a homotopy $3$--cell. Now Property P \cite{KM}
implies that $K$ is the unknot in $B$, so $L$ is the unknot in
$B'$, a contradiction.

Now consider the case that $X=S^2\times S^1$. If $S$ is a
separating essential sphere in $M$, one can get contradiction by
the same argument as before. Now suppose $S\subset M$ is a
nonseparating sphere. Let $N$ (or $N'$) be the compact manifold
obtained by cutting $X$ (or $Y$) open along $S$, $\widehat N$ (or
$\widehat{N'}$) be the closed manifold obtained by capping off the
2 sphere boundary components of $N$ (or $N'$) by balls. Now $L$
can be viewed as a nontrivial null-homotopic knot in $\widehat
{N'}$, such that a nontrivial surgery on $L$ yields
$\widehat{N}=S^3$. Using Property P, we can rule out this case as
before.
\end{proof}

\begin{lem}\label{DisjFibre} $K$ can be isotoped to be disjoint from
$F$.
\end{lem}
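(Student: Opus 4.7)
The plan is to put $K$ transverse to $F$ and then reduce the number of intersections $|K\cap F|$ to zero. After a small isotopy, $K$ meets $F$ transversely in $n=|K\cap F|$ points; since $[K]\cdot[F]=0$ forces the signed count of intersections to vanish, $n$ is even. It suffices to show that, whenever $n>0$, there is an isotopy of $K$ in $X$ that eliminates a pair of consecutive opposite-signed intersections, and then to iterate until $n=0$.

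A natural setting is the infinite cyclic cover $\widetilde X = F\times\mathbb R$ associated to the fibration $X\to S^{1}$. The condition $[K]\cdot[F]=0$ says exactly that the composition $K\hookrightarrow X\to S^{1}$ has degree zero, so $K$ lifts to a closed simple curve $\widetilde K\subset\widetilde X$. By compactness $\widetilde K$ lies in some slab $F\times[a,b]$, and its $\mathbb Z$-translates $\{T^{i}\widetilde K\}_{i\ne 0}$ are disjoint copies. If one can isotope $\widetilde K$, in the complement of these translates, into a slab $F\times(c,c+1)$ of length less than one, then the isotopy extends equivariantly and descends to an isotopy of $K$ in $X$ disjoint from $F$.

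The key use of the hypotheses is the following. By Proposition~\ref{DegOne}, the null-homotopy of $L$ in $Y$ yields a degree-one map $f\co X\to Y$ with $f(K)=L$ that restricts to a homeomorphism on complements; combined with the irreducibility of $M=X-\mathrm{int}(\mathrm{Nd}(K))$ provided by Lemma~\ref{IrrCompl}, this suggests the following local cancellation. Take a subarc $\kappa\subset K$ joining two consecutive intersections of opposite sign, close it with a short arc $\lambda\subset F$ to a loop $C=\kappa\cup\lambda$, and use $f$ and the null-homotopy of $L$ in $Y$ to produce a singular disk in $X$ with boundary $C$. A Papakyriakopoulos tower, or Dehn's-lemma-type argument, carried out inside the irreducible manifold $M$, then upgrades this singular disk to an embedded disk, which can be used to isotope $\kappa$ across $F$ and cancel the two chosen intersections.

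The main obstacle I anticipate is this promotion from singular to embedded: since $K$ itself need not be null-homotopic in $X$, Dehn's lemma cannot be applied to $K$ directly, and the argument must be set up at the level of the relative class $[\kappa\cup\lambda]$, taking care to control how the resulting disk meets $F$ and how it interacts with the remaining intersection points of $K\cap F$. Once even a single cancellation is achieved, the argument iterates, reduces $n$ to zero, and yields the required isotopy of $K$ off $F$.
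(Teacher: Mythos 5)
Your proposal is a direct geometric cancellation argument: lift to the infinite cyclic cover, try to cancel opposite-signed intersection points of $K$ with $F$ by finding embedded disks. The paper's proof is completely different and much shorter: since $[K]\cdot[F]=0$, the class $[F]$ lifts to a class $\theta\in\homo_2(M,\partial M-T)$; pick a taut surface $F'\subset M$ representing $\theta$; by Lackenby's Theorem~A.21 of \cite{La} (Thurston norm is preserved under surgery on a null-homotopic knot), $F'$ remains taut in $X$; since $X$ fibres with fibre $F$ and $[F']=[F]$, the taut surface $F'$ is isotopic to the fibre $F$, so $F$ can be isotoped to be disjoint from $K$.

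The gap in your proposal is genuine and you have correctly located it yourself, but it is fatal rather than a technical nuisance. The loop $C=\kappa\cup\lambda$ has no reason to be null-homotopic in $X$, nor is $f(C)$ null-homotopic in $Y$: the null-homotopy of $L$ in $Y$ gives you a singular disk bounded by all of $L$, not by a subarc of $L$ completed with an unrelated arc $f(\lambda)$. So there is no singular disk to feed into a tower argument in the first place, and the subsequent Papakyriakopoulos/Dehn's lemma step has nothing to act on. Even granting a singular disk, the loop theorem would require $C$ to lie in the boundary of a compact submanifold in which $C$ is compressible, and one would additionally have to control the resulting embedded disk away from the remaining points of $K\cap F$ and away from $F$ except along $\lambda$; none of this is addressed. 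The underlying difficulty is that $K$ need not be null-homotopic in $X$, and no amount of local Whitney-style cancellation will see the global input (null-homotopy of $L$ in $Y$). The correct mechanism is homological: the intersection condition lets you push $[F]$ into $\homo_2(M,\partial M-T)$, and then a Thurston-norm comparison (Lackenby) shows a taut representative there is still taut in $X$, where uniqueness of the norm-minimizing surface in the fibre class finishes the argument.
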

\begin{proof}
By the homological restriction (\ref{ZeroIntersect}), there exists
a unique element $$\theta\in\homo_2(M,\partial M-T)$$ whose image
in $\homo_2(X,\partial X)$ is $[F]$. Suppose $(F',\partial
F')\subset(M,\partial M-T)$ is a taut surface in the homology
class $\theta$. Since $L$ is null-homotopic in $Y$, by
\cite[Theorem~A.21]{La} $F'$ is taut in $X$, hence $F'$ is
isotopic to the fibre $F$.
\end{proof}

\begin{lem}
$X\ne S^2\times S^1$.
\end{lem}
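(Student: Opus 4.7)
The plan is to derive a contradiction with Lemma~\ref{IrrCompl} by exhibiting an essential $2$--sphere in $M$. Suppose for contradiction that $X=S^2\times S^1$. The first step is to identify the fibre: since $H_2(X;\mathbb{Z})\cong\mathbb{Z}$ is generated by $[S^2\times\{\mathrm{pt}\}]$ and the (connected) fibre $F$ represents a generator, $[F]$ has Thurston norm zero, so $F$ is a $2$--sphere. In particular, $F$ is nonseparating in $X$.

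Next, I would apply Lemma~\ref{DisjFibre} to isotope $K$ off $F$, so that after isotopy $F\subset M=X-\mathrm{int}(\mathrm{Nd}(K))$. I claim $F$ is essential in $M$: any $3$--ball in $M$ bounded by $F$ would also be a $3$--ball in $X$ bounded by $F$, contradicting the fact that $F$ is nonseparating in $X$.

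Finally, Lemma~\ref{IrrCompl} asserts that $M$ is irreducible, meaning every embedded $2$--sphere in $M$ bounds a $3$--ball, which contradicts the existence of the essential sphere $F\subset M$. Hence $X\ne S^2\times S^1$.

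There is no real obstacle here; the only point requiring a moment's care is verifying that $F$ remains essential after being pushed into $M$, but this is immediate from its nonseparating nature in $X$. The argument reduces the ``exceptional'' reducible case $X=S^2\times S^1$ to the irreducibility statement already established in Lemma~\ref{IrrCompl}.
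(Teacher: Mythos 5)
Your argument is essentially the paper's, which simply reads: ``Otherwise $M$ is reducible by Lemma~\ref{DisjFibre}, which violates Lemma~\ref{IrrCompl}.'' Both you and the paper isotope $K$ off the fibre $F$ and observe that $F$ is then a nonseparating sphere in $M$. The one soft spot in your write-up is the jump from ``$[F]$ has Thurston norm zero'' to ``$F$ is a $2$--sphere'': a closed connected surface of norm zero could a priori be a torus, and $H_2$ alone does not exclude this. The cleanest way to close the gap is via $\pi_1$: the fibration gives an exact sequence $1\to\pi_1(F)\to\pi_1(X)\to\mathbb{Z}\to1$, and $\pi_1(X)=\pi_1(S^2\times S^1)=\mathbb{Z}$ surjecting onto $\mathbb{Z}$ forces $\pi_1(F)=1$, hence $F=S^2$. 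With that patch the proof is correct and matches the paper's approach.
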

\begin{proof}
Otherwise $M$ is reducible by Lemma~\ref{DisjFibre}, which
violates Lemma~\ref{IrrCompl}.
\end{proof}

\begin{prop}[Boileau--Wang]\label{YFibred}
Let $\theta$ be the homology class in the proof of
Lemma~\ref{DisjFibre}, $i_Y(\theta)$ is its image in
$\homo_2(Y,\partial Y)$. Then $Y$ is the connected sum of $Y^*$
and a homotopy $3$--sphere, where $Y^*$ fibres over the circle,
and the fibre of $Y^*$ represents the homology class
$i_Y(\theta)$.
\end{prop}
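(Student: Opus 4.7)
The plan is to push the fibration of $X$ across the degree-$1$ map $f \co X \to Y$ of Proposition \ref{DegOne} to a near-fibration of $Y$, and then apply Stallings' Fibration Theorem together with Property P to identify the discrepancy as a homotopy $3$-sphere summand.

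First, I would extend the fibration map. By Lemma \ref{DisjFibre} we may assume $F \subset M$ and $F \cap T = \emptyset$, so that any curve $\gamma$ on $T$ satisfies $p_*([\gamma]) = \gamma \cdot F = 0$. Consequently $p|_T \co T \to S^1$ is null-homotopic, and $p|_M$ extends over the surgery solid torus of $L$ in $Y$ to a smooth map $\tilde p \co Y \to S^1$. A regular value $x_0$ has preimage a properly embedded closed surface $G \subset Y$ representing $i_Y(\theta)$; by choosing the extension on $\mathrm{Nd}(L)$ to have its image contained in the image of $p|_T$ and to be Morse, $G$ can be arranged so that $G \cap M$ is a surface isotopic to $F$, with the remaining components lying in $\mathrm{Nd}(L)$ as $2$-spheres.

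Second, I would analyze $\tilde p_* \co \pi_1(Y) \to \pi_1(S^1) = \mathbb{Z}$. Since $\tilde p$ extends $p|_M$ and $f$ is a homeomorphism on $M$, the composition $\pi_1(X) \xrightarrow{f_*} \pi_1(Y) \xrightarrow{\tilde p_*} \mathbb{Z}$ agrees with $p_* \co \pi_1(X) \to \mathbb{Z}$. Because $f$ has degree one, $f_*$ is surjective, so $\tilde p_*$ is surjective and $\ker \tilde p_*$ equals the image of $\ker p_* = \pi_1(F)$ under $f_*$ — in particular it is finitely generated. Stallings' Fibration Theorem, applied to the prime decomposition of $Y$, then identifies a unique non-spherical prime summand $Y^*$ on which $\tilde p_*$ is non-trivial, and shows that $Y^*$ fibres over $S^1$ with fibre a component of $G$ representing $i_Y(\theta)$.

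Third, I would control the remaining prime summands. Each such summand is a closed $3$-manifold on which $\tilde p_*$ is trivial and which arises from the Dehn surgery relating $Y$ to the fibred manifold $X$. Since $L$ is a nontrivial null-homotopic knot, Property P (used in Lemma \ref{IrrCompl}) prevents each such summand from being anything other than a homotopy $3$-sphere; amalgamating these into a single summand $\Sigma$ gives the decomposition $Y = Y^* \# \Sigma$. The main obstacle is this last step: combining Property P with the geometry of the surgery to rule out exotic prime summands, and in particular verifying that the extra spherical components of $G$ genuinely bound the fake $3$-cells that one expects, rather than hiding more complicated topology.
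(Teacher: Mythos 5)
Your core approach matches the paper's: use the degree-$1$ map $f\co X\to Y$ from Proposition~\ref{DegOne}, observe that $\tilde p_*\circ f_*=p_*$ on $\pi_1$, conclude from surjectivity of $f_*$ that $\ker\tilde p_*=f_*(\pi_1(F))$ is finitely generated, and apply Stallings' Fibration Theorem. That computation of the kernel is exactly the content the paper cites from the proof of \cite[Theorem~2.1]{BW}, so steps one and two are essentially right (the explicit geometric extension of the fibration map to a Morse function on $\mathrm{Nd}(L)$ is more than is needed, but harmless).

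The gap is in your third step, and you sense it yourself. Property P is the wrong tool for dismissing the remaining prime summands. Property P concerns surgeries on knots in $S^3$ (or, via Lemma~\ref{IrrCompl}, knots in homotopy balls), and there is no obvious way to localize the surgery on the null-homotopic knot $L$ into a single prime summand of $Y$ so as to apply it. Moreover, a prime summand $Z$ of $Y$ with $\tilde p_*|_{\pi_1(Z)}=0$ could a priori be a lens space or any other closed manifold with perfect image; nothing in Property P addresses this. What is actually needed is purely group-theoretic: $\ker\tilde p_*$ is a finitely generated normal subgroup of infinite index in $\pi_1(Y)=\pi_1(Y_1)*\cdots*\pi_1(Y_k)$, and a finitely generated normal subgroup of infinite index in a nontrivial free product must be trivial. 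Since $\pi_1(Y)\ne\mathbb Z$ forces $\ker\tilde p_*\ne 1$ in our setting (and $\pi_1(Y)=\mathbb Z$ would give $Y^*=S^2\times S^1$ directly), the free product must be trivial: at most one $\pi_1(Y_i)$ is nontrivial, so all the other $Y_i$ are homotopy $3$--spheres. Stallings then applies to the one prime (hence irreducible, or $S^2\times S^1$) summand carrying $\pi_1(Y)$. Replace your Property P argument with this and the proof closes.
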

\begin{proof}
Let $p\co\pi_1(Y)\to\mathbb Z$ be the homomorphism dual to
$i_Y(\theta)$. By Proposition~\ref{DegOne} and the proof of
\cite[Theorem~2.1]{BW}, $\mathrm{Ker}\;p$ is finitely generated.
The result then follows from Stallings' Fibration Theorem
\cite{St}.
\end{proof}

Cut $X$ open along $F$, we get a product $F\times I$, thus $K$ is
a knot in $F\times I$. Let $(M_1,\gamma_1),(M_2,\gamma_2),E,G$ be
as in Section~\ref{SectProd}. We can decompose $(M_1,\gamma_1)$
along a collection $\mathcal C\times I$ of non-separating product
disks and annuli to get $(M_2,\gamma_2)$.

\begin{prop}\label{DecomExt}
If $M$ is $F_T$--atoroidal, then $M_2$ has the taut-extension
property.
\end{prop}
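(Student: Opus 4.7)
The plan is to establish that $M_2$ is $S_T$--atoroidal and then to use the hierarchy construction from the proof of Theorem~\ref{Gabai} to extend the taut decomposition $(M_2, \gamma_2) \stackrel{S}{\rightsquigarrow} (M_2', \gamma_2')$ across the meridional Dehn filling at $T$. Since $G \times I$ is obtained from $M_2$ by filling $T$ along the meridian of $K$ (i.e., by regluing $\mathrm{Nd}(K)$), the extended hierarchy will certify that $S$ gives a taut decomposition of $(G \times I, \partial G \times I)$.

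For the atoroidality step, let $V \subset M_2 - S$ be an I--cobordism between $T$ and a torus $P$. Since $\partial V = T \cup P$ with $P \subset \mathrm{int}(M_2)$, we have $V \cap \partial M_2 = T$, so $V$ avoids $G \times \{0,1\}$ and $\partial G \times I$, and in particular avoids $F \times \{0,1\} \subset \partial M_1$. Under the quotient map $q\co M_1 \to M$ that identifies the two copies of $F$ in $\partial M_1$, the restriction $q|_V$ is therefore injective and places $V$ inside $M - F$ as an I--cobordism from $T$ to $P$. By the $F_T$--atoroidality of $M$, $P$ is parallel in $M$ either to $T$ or to some boundary torus $T_i \subset \partial Y$. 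I would rule out the case $P \sim T_i$ using the maximality of the characteristic product pair $E \times I$: the parallelism region would afford an isotopy of $K$ into a collar of $T_i$, contradicting the fact that one cannot enlarge $E$. In the remaining case $P \sim T$ via a region $W \cong T^2 \times I$, since $\partial W \cap \partial M = T$ and $F \cap T = \emptyset$, each component of $F \cap W$ has boundary only on $P$; by the classification of incompressible surfaces in $T^2 \times I$ these components are boundary--parallel annuli, removable by isotopy of $F$. Similarly $W \cap (E \times I) = \emptyset$ by the maximality of $E$, so $W \subset M_2$, yielding $P \sim T$ in $M_2$.

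Given $S_T$--atoroidality, I follow the scheme of Theorem~\ref{Gabai}: extend the taut decomposition along $S$ to a sutured hierarchy of $(M_2, \gamma_2)$ whose decomposing surfaces are disjoint from $T$ and whose terminal sutured manifold is a product piece together with a $(T^2 \times I, \delta)$ piece meeting $T$ as $T^2 \times 0$. Meridional Dehn filling at $T$ converts the $T^2 \times I$ piece into a solid torus $D^2 \times S^1$ with sutures on the far boundary; provided these sutures are not null--homotopic in the solid torus (the ``bad slope'' condition), a meridian disk gives a further decomposition to a product. The bad slope is avoided since $(G \times I, \partial G \times I)$ is itself a taut product sutured manifold, so the filled hierarchy cannot fail tautness at the last step. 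The resulting taut hierarchy of $(G \times I - \mathrm{int}(\mathrm{Nd}(S)), \gamma'')$ starting with $S$ certifies that $S$ gives a taut decomposition of $(G \times I, \partial G \times I)$, i.e., $S$ is tautly extendable. The principal obstacle is the atoroidality step, where ruling out $P \sim T_i$ and controlling the parallelism region in the case $P \sim T$ both hinge on the maximality of $E$, the essential content of Section~\ref{SectProd}.
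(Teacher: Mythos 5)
Your plan diverges from the paper's at the key step, and that is exactly where a genuine gap appears. The paper does not separately prove that $M_2$ is $S_T$--atoroidal; it runs the hierarchy from $(M,\partial M)$ itself, decomposing in turn along $F$, along $\mathcal C\times I$, along $S$, and then extending as in the proof of Theorem~\ref{Gabai}, where the $F_T$--atoroidality of $M$ supplies property 3). The substantive step---the one that lets one conclude the $\alpha$-filled sequence is a taut hierarchy---is the identification of the exceptional slope as $\beta$, the meridian of $L$: if the exceptional slope were anything else, then filling along $\beta$ (which yields $Y$) would produce a taut foliation transverse to $L$, forcing $L$ to have infinite order in $\pi_1(Y)$ and contradicting its null-homotopy. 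This is precisely where the hypothesis on $L$ enters the argument. Your substitute---``the bad slope is avoided since $(G\times I,\partial G\times I)$ is itself a taut product sutured manifold, so the filled hierarchy cannot fail tautness at the last step''---runs the implication the wrong way. Tautness propagates \emph{up} a sutured manifold hierarchy: if the terminal sutured manifold is a product, then each intermediate one is taut; it does not propagate downward. Knowing that $(G\times I,\partial G\times I)$ is taut says nothing about whether its decomposition along $S$ is taut, and that is exactly the content of the taut-extension property you are trying to establish. So the argument is circular precisely where the null-homotopicity of $L$ must be invoked.

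A secondary concern: the atoroidality check for $M_2$ is avoidable (the paper runs the hierarchy from $M$, whose $F_T$--atoroidality is hypothesized), and as you have written it the check is shaky---maximality of $E$ governs which product disks and annuli meet $K$, and does not obviously forbid an I-cobordism in $M_2-S$ from $T$ to a torus parallel to some boundary torus of $Y$. But this is a side issue; the essential missing idea is the identification of the exceptional slope with $\beta$ via the null-homotopy of $L$.
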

\begin{proof}
Suppose $S$ is a non-separating decomposition surface in the
sutured manifold $(M_2,\gamma_2)$ such that $S\cap T=\emptyset$
and the decomposition
$$(M_2,\gamma_2)\stackrel{S}{\rightsquigarrow}(M_3,\gamma_3)$$
yields a taut sutured manifold,

As in the proof of Theorem~\ref{Gabai}, we can extend the taut
decomposition sequence
$$(M,\partial M)=(M_0,\gamma_0)\stackrel{F}{\rightsquigarrow}(M_1,\gamma_1)\stackrel{\mathcal C\times I}
{\rightsquigarrow}(M_2,\gamma_2)\stackrel{S}{\rightsquigarrow}(M_3,\gamma_3)$$
to a sequence
$$(M_0,\gamma_0)\stackrel{F}{\rightsquigarrow}(M_1,\gamma_1)\stackrel{\mathcal C\times I}
{\rightsquigarrow}(M_2,\gamma_2)\stackrel{S}{\rightsquigarrow}(M_3,\gamma_3)\stackrel{S_4}{\rightsquigarrow}
\cdots\stackrel{S_n}{\rightsquigarrow}(M_n,\gamma_n)$$ with the
properties 1),2),3) there.

Fix a slope on $T$, then we can fill each $T\subset M_i$ along
this slope by a solid torus to get the sequence
$$\mathcal N:(N_0,\delta_0)\stackrel{F}{\rightsquigarrow}(N_1,\delta_1)\stackrel{\mathcal C\times I}{\rightsquigarrow}
\cdots\stackrel{S_n}{\rightsquigarrow}(N_n,\delta_n).$$ As argued
in Theorem~\ref{Gabai}, if the slope on $T$ is not the one that
kills $s(\delta_n)\cap\widehat H$ in $\pi_1(\widehat H)$, then one
can decompose $(N_n,\delta_n)$ along a $D^2\times\mathrm{point}$
to get a product sutured manifold. Hence the above sequence
$\mathcal N$ is extended to a sutured manifold hierarchy. Now we
can apply \cite[Theorem~5.1]{G1} to the sutured manifold hierarchy
to obtain foliations as in the statement of Theorem~\ref{Gabai}.

Since $L$ is null-homotopic in $Y$, the distinguished slope that
kills $s(\delta_n)$ must be the meridian of $L$. As a result, the
sequence $\mathcal N$ for the slope $\alpha$ is taut. In
particular, the decomposition $$(G\times I=M_2(\alpha),\partial
G\times I)\stackrel{S}{\rightsquigarrow}(M_3(\alpha),\delta_3)$$
is taut.
\end{proof}

\begin{proof}[Proof of Theorem~\ref{FibredSurg} when $M$ is $F_T$--atoroidal]
By Proposition~\ref{DecomExt}, the condition in
Proposition~\ref{NoOrth} is satisfied, so the only possibility of
$G$ is that it is an annulus and
$$i_*\co\homo_1(K;\mathbb Q)\to\homo_1(G;\mathbb Q)$$ is an
isomorphism. Hence $\partial(G\times I)$ is I-cobordant to
$T=\partial\mathrm{Nd}(K)$. Since $M$ is $F_T$--atoroidal,
$\partial(G\times I)$ is parallel to $T$, so $K$ is isotopic to
the core of $G\times I$. This shows that $K$ can be isotoped to
lie on $F$.

Let $\lambda$ be the slope on $T$ which is specified by $F$. $F$
is compressible in $M(\lambda)$, hence not taut there.
Theorem~\ref{Gabai} then asserts that $\beta=\lambda$.
\end{proof}

Now consider the case when $M$ is not $F_T$--atoroidal, namely,
there exists a torus $R\subset M_1$ which is I-cobordant to $T$ in
$M_1$, but $R$ is not parallel to $T$.

Let us choose $R$ to be an ``innermost" torus in $M_1$ which is
I-cobordant to $T$ but not parallel to $T$. By
Lemma~\ref{BoundSolTor}, $R$ bounds a solid torus $U_X$ in $X$.
Any torus in $M-\mathrm{int}(U_X)$ which is I-cobordant to $R$ is
actually parallel to $R$.

\begin{proof}[Proof of Theorem~\ref{FibredSurg} when the above torus $R$ is present]
Let $K'$ be the core of $U_X$, $U_Y$ be the manifold obtained from
$U_X$ by $\beta$--surgery on $K$. By \cite{GSurg}, one of the
following cases must hold.

\noindent1) $U_Y=D^2\times S^1$. In this case $K$ is a braid in
$U_X$, and $L$ is a braid in $U_Y$.

\noindent2) $U_Y=U'\#W$, where $W$ is a closed $3$--manifold and
$1<|\homo_1(W)|<\infty$.

\noindent3) $U_Y$ is irreducible and $\partial U_Y$ is
incompressible.

In Case 1), let $L'$ be the core of $U_Y$, then $L'$ has finite
order in $\pi_1(Y)$. By Proposition~\ref{YFibred} $L'$ is
null-homotopic. $X$ can be viewed as obtained by a surgery on
$L'$. By the case we have proved, we have
$\Delta(\alpha',\beta')=1$, where $\alpha'$ is the meridian of
$U_X$, $\beta'$ is the meridian of $U_Y$.

Let $w,v$ be the winding numbers of $K,L$ in $U_X,U_Y$, then
$\alpha'$ cobounds a $w$--punctured disk $D_X$ with $w$ copies of
$\alpha$ in $U_X-\mathrm{Nd}(K)$, $\beta'$ cobounds a
$v$--punctured disk $D_Y$ with $v$ copies of $\beta$ in
$U_Y-\mathrm{Nd}(L)$. Consider the intersection of $D_X,D_Y$, we
get $\Delta(\alpha',\beta')=wv\Delta(\alpha,\beta)$, so $w=1$,
which means that $K$ is the core of $U_X$, a contradiction to the
assumption that $R$ is not parallel to $T$.

In Case 2), $Y$ would have a summand $W$, which contradicts to
Proposition~\ref{YFibred}.

In Case 3), $L$ is a null-homotopic knot in $U_Y$, then by
Proposition~\ref{DegOne} and \cite[Lemma~15.12]{Hem},
$\pi_1(U_X)=\mathbb Z$ surjects onto $\pi_1(U_Y)$, a contradiction
to the incompressibility of $\partial U_Y$.
\end{proof}

\section{Reducing the norm of fibred manifolds}

In this section, we are going to prove Theorem~\ref{NormDecr} The
notation is as in Theorem~\ref{NormDecr}.

\begin{lem}\label{DisjFibre2} $K$ can be isotoped in $X$ to be disjoint from
$F$.
\end{lem}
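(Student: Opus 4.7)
The plan is to mirror the proof of Lemma~\ref{DisjFibre}: produce a taut surface $(F',\partial F')\subset(M,\partial M-T)$ representing $\theta$, show it remains taut after the $\alpha$-filling, conclude via Thurston's classification of norm-minimizing representatives in a fibred class that $F'$ is isotopic in $X$ to the fibre $F$, and read off from $F'\subset M$ that $F$ can be isotoped off the core $K$ of the $\alpha$-filling. The role played in Section~4 by the null-homotopy of $L$ and Lackenby's \cite[Theorem~A.21]{La} will be taken over here by the norm-decrease hypothesis $x(i_Y(\theta))<x(F)$.

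Choose a taut $(F',\partial F')\subset(M,\partial M-T)$ in the class $\theta$; then $x(F')=x_M(\theta)$. Since $\partial F'$ avoids $T$, the surface $F'$ itself lies inside both $X$ and $Y$, where it represents $i_X(\theta)=[F]$ and $i_Y(\theta)$ respectively; comparing with norm-minimizers gives
\[
x_M(\theta)\;\geq\; x(F)\qquad\text{and}\qquad x_M(\theta)\;\geq\; x_Y(i_Y(\theta)).
\]
The key input from Gabai~\cite{G2} underlying Theorem~\ref{Gabai} is that \emph{at most one} slope on $T$ has the property that Dehn filling along it destroys the tautness of $F'$. Consequently at least one of $\alpha,\beta$ is a preserved slope. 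If $\beta$ were the preserved one, we would have $x(F')=x_Y(i_Y(\theta))$, and combining with $x(F')=x_M(\theta)\geq x(F)$ would yield $x_Y(i_Y(\theta))\geq x(F)$, contradicting the standing hypothesis. Hence $\alpha$ is preserved, so $F'$ is taut in $X$ representing the fibred class $[F]$.

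By Thurston's theorem on fibred faces of the Thurston norm ball, every norm-minimizing representative of a fibred class in $X$ is isotopic to the corresponding fibre, so $F'$ is isotopic in $X$ to $F$; the inverse isotopy carries $F$ into $M$, where it is automatically disjoint from $K$. The step I expect to require most care is the ``at most one exceptional slope'' claim: in the form packaged as Theorem~\ref{Gabai} it comes with an $F'_T$-atoroidal hypothesis which is not a standing assumption of Theorem~\ref{NormDecr}. If that hypothesis fails, the remedy should follow the pattern of Section~4: pick an innermost I-cobordant torus in $M$ disjoint from $F'$, extract the corresponding solid-torus filling, and run the trichotomy of~\cite{GSurg}, each of whose non-trivial branches should either return us to the atoroidal situation or be directly incompatible with the norm-decrease inequality.
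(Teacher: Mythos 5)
Your argument is essentially the paper's argument, reorganized from a contrapositive to a direct one. The paper notes that if $K$ cannot be pushed off $F$ then $x_M(\theta)>x(F)$, takes a taut $F'\subset(M,\partial M-T)$ representing $\theta$, and cites \cite[Corollary~2.4]{G2} to say $F'$ stays taut after at least one of the two fillings; both options then contradict $x(F')>x(F)>x(i_Y(\theta))$. You instead show $F'$ stays taut after the $\alpha$-filling (ruling out $\beta$ via the norm hypothesis) and then explicitly invoke the uniqueness of the fibre as norm-minimizer to produce the isotopy. The two are the same argument, and the key input is identical.

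Your flagged worry about the $S_T$-atoroidal hypothesis is, however, a red herring: the paper does not invoke Theorem~\ref{Gabai} here but rather \cite[Corollary~2.4]{G2}, which has no atoroidal hypothesis --- the reduction from the general case to the atoroidal case (innermost I-cobordant torus, etc.)\ is carried out once and for all inside Gabai's proof of that corollary. So the remedy you sketch at the end (innermost torus plus the trichotomy of \cite{GSurg}) is unnecessary for this lemma; that machinery is needed later in Section~5 for the non-atoroidal branch of the main Theorem~\ref{NormDecr}, not for Lemma~\ref{DisjFibre2} itself.
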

\begin{proof}
Otherwise, the Thurston norm of $\theta$ would be larger than
$x(F)$. Let $(F',\partial F')\subset(M,\partial M-T)$ be a taut
surface in the homology class $\theta$. By
\cite[Corollary~2.4]{G2}, $F'$ remains taut in at least one of $X$
and $Y$, which contradicts to the assumption that
$x(F')>x(F)>x(i_Y(\theta))$.
\end{proof}

Cut $X$ open along $F$, we get a product $F\times I$, let
$M_1,M_2,E,G$ be as in Section~\ref{SectProd}.

\begin{prop}\label{DecomExt2}
If $M$ is $F_T$--atoroidal, then $M_2$ has the taut-decomposition
property.
\end{prop}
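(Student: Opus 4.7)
The plan is to follow the proof of Proposition \ref{DecomExt} nearly verbatim, substituting the Thurston norm hypothesis of Theorem \ref{NormDecr} for the null-homotopy hypothesis that was used there to rule out $\alpha$ as the ``bad'' distinguished slope. Let $S \subset M_2 - T$ be a non-separating decomposition surface yielding a taut $(M_3,\gamma_3)$; the goal is to show that $S$ also gives a taut decomposition of $(G \times I, \partial G \times I)$.

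First, exactly as in the proofs of Theorem \ref{Gabai} and Proposition \ref{DecomExt}, extend the taut sequence
$$(M,\partial M) = (M_0,\gamma_0) \stackrel{F}{\rightsquigarrow} (M_1,\gamma_1) \stackrel{\mathcal{C}\times I}{\rightsquigarrow} (M_2,\gamma_2) \stackrel{S}{\rightsquigarrow} (M_3,\gamma_3) \stackrel{S_4}{\rightsquigarrow} \cdots \stackrel{S_n}{\rightsquigarrow} (M_n,\gamma_n)$$
to a full sequence satisfying conditions 1)--3) of that proof. The $F_T$--atoroidality hypothesis forces the I-cobordism piece of $(M_n,\gamma_n)$ to be $T^2 \times I$, so the corresponding filled piece is a solid torus $\widehat H = D^2 \times S^1$. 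Now fill every $T \subset \partial M_i$ along $\alpha$ to obtain the sequence $\mathcal{N}$. Provided $\alpha$ is not the distinguished slope that makes $s(\delta_n) \cap \widehat{H}$ null-homotopic in $\widehat{H}$, decomposition along a meridian disk of $\widehat{H}$ extends $\mathcal{N}$ to a sutured manifold hierarchy of $X$, and \cite[Theorem~5.1]{G1} produces a taut foliation of $X$ having $F$ as a compact leaf. In particular the step $(G \times I, \partial G \times I) \stackrel{S}{\rightsquigarrow} (M_3(\alpha), \delta_3)$ is taut, giving the required taut-extension property.

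The key step, and the only genuine deviation from the proof of Proposition \ref{DecomExt}, is showing $\alpha$ is not the distinguished slope. Suppose for contradiction that it were. Then by the ``at most one slope'' clause of Theorem \ref{Gabai}, the complementary slope $\beta$ is good, so $Y = M(\beta)$ carries a taut foliation with $F$ (viewed in $M \subset Y$ via Lemma \ref{DisjFibre2}) as a compact leaf. Since a compact leaf of a taut foliation is Thurston norm minimizing in the ambient manifold, this forces $x(i_Y(\theta)) = x(F)$, contradicting the strict inequality $x(i_Y(\theta)) < x(F)$ from the hypothesis of Theorem \ref{NormDecr}. Hence $\alpha$ is not the bad slope, and the argument concludes exactly as in Proposition \ref{DecomExt}. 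I anticipate no additional obstacle: the strict norm inequality plays precisely the role that null-homotopy of $L$ played in the previous proposition.
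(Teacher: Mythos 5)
Your proof is correct and follows exactly the route the paper intends: the paper's own proof consists of the single remark that one runs the argument of Proposition~\ref{DecomExt} with the strict norm inequality $x(i_Y(\theta))<x(F)$ replacing null-homotopy of $L$, and you have correctly unpacked what that substitution means — if $\alpha$ were the distinguished bad slope of Theorem~\ref{Gabai}, then $\beta$ would be good, producing a taut foliation of $Y$ with $F$ as a compact leaf, so $F$ would be Thurston norm minimizing in $Y$ (by Thurston \cite[Corollary~1]{T}), giving $x(i_Y(\theta))=x(F)$ and contradicting the hypothesis. This is precisely the intended argument, just spelled out.
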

\begin{proof}
The proof is the same as Proposition~\ref{DecomExt}, except that
we use the fact that $x(i_Y(\theta))<x(F)$ instead of the
null-homotopicity of $L$.
\end{proof}

Having Proposition~\ref{DecomExt2} in hand, the proof of
Theorem~\ref{NormDecr} when $M$ is $F_T$--atoroidal is the same as
 the proof of Theorem~\ref{FibredSurg}. Now let us consider the case
$M$ is not $F_T$--atoroidal, namely, there exists a torus
$R\subset M_1$ which is I-cobordant to $T$ in $M_1$, but $R$ is
not parallel to $T$.

Let us choose $R$ to be an ``innermost" torus in $M_1$ which is
I-cobordant to $T$. By Lemma~\ref{BoundSolTor}, $R$ bounds a solid
torus $U_X$ in $F\times I$. Any torus in $M-\mathrm{int}(U_X)$
which is I-cobordant to $R$ is actually parallel to $R$.

\begin{proof}[Proof of Theorem~\ref{NormDecr} when the above torus $R$ is present]
Let $K'$ be the core of $U_X$. $U_Y$ is the manifold obtained from
$U_X$ by $\beta$--surgery on $K$. Let $F_1$ be a norm minimizing
surface in the homology class $i_Y(\theta)$. $R$ is I-cobordant to
$T$, $\theta\in\homo_2(M,\partial M-T)$, so $F_1$ can be isotoped
to intersect $R$ in $2n$ essential circles, such that the sum of
these oriented circles is null-homologous in $R$. A standard
argument enables us to surger $F_1$ to get a new surface $F_2$ in
the same homology class, such that $\chi(F_2)=\chi(F_1)$, and
$F_2\cap R=\emptyset$.

If the slope of $F_1\cap R$ does not bound a disk in $U_Y$, then
$x(F_2)=x(F_1)$. Note that the components of $F_2$ in $U_Y$ are
null-homologous, removing these components we get a surface $F_3$
in the same homology class, $x(F_3)\le x(F_2)$. But $F_3\subset X$
is also a surface in the homology class of $[F]$ with $x(F_3)\le
x(F_2)=x(F_1)< x(F)$, we get a contradiction.

Hence the slope of $F_1\cap R$ bounds a disk in $U_Y$, which means
that $U_Y=U'\#W$, where $U'$ is a solid torus and $W$ is a
rational homology sphere by \cite{GSurg}. Let $\beta'$ be the
slope of $F_1\cap R$, $\alpha'$ be the meridian of $U_X$,
$M'=X-\mathrm{int}(U_X)$. Let $Y'$ be the manifold obtained from
$M'$ by $\beta'$--filling, then $Y=Y'\#W$. Since $W$ is a rational
homology sphere, the Thurston norm of $Y'$ is equal to the
Thurston norm of $Y$. Apply the $F$--atoroidal case of
Theorem~\ref{NormDecr} to $X,Y'$, we know that $K'$ can be
isotoped to lie on $F$, $\beta'$ is the slope specified by $F$.

Let $V=U_X-\mathrm{int}(\mathrm{Nd}(K))$. Suppose the winding
number of $K$ in $U_X$ is $w>0$, then $\homo_1(T)$ is generated by
$w[\beta']$ and $\frac1w[\alpha']$ in $\homo_1(V)$. Since $\beta'$
bounds a disk $D$ in $U_Y$, $[\beta']$ is an integer multiple of
$[\beta]$ in $\homo_1(V)$, namely,
$$[\beta']=k(rw[\beta']+s\frac1w[\alpha']), \quad k,r,s\in\mathbb Z.$$
 One then deduces that $w=1$, so $V$ is a homology $T^2\times I$,
 hence $\homo_1(U_Y)\cong\mathbb Z$ has no torsion. By
 \cite{GSurg}, the only possibility is $U_Y=D^2\times S^1$, hence $K$ is
 a braid in $U_X$. But $w=1$, so $T$ is parallel to $R$, a contradiction.
\end{proof}

\end{document}